\newtheorem{theorem}{Theorem}[section]
\newtheorem{proposition}[theorem]{Proposition}
\newtheorem{lemma}[theorem]{Lemma}
\theoremstyle{definition}
\newtheorem{example}[theorem]{Example}
\newtheorem{col}[theorem]{Colloary}
\newtheorem{remark}[theorem]{Remark}
\newtheorem{df}[theorem]{Definition}
\numberwithin{equation}{section}
\newcommand{\h}{\mathcal{H}}
  \DeclareMathOperator{\Arg}{Arg}
\newcommand{\C}{\mathbb{C}}
\author[M. Ptak]{Marek Ptak}
\address{Marek Ptak, Department of Applied Mathematics,
University of Agriculture, ul. Balicka 253c\\ 30-198 Krak\'ow, Poland.}
\email{rmptak@cyf-kr.edu.pl}
\thanks{The research of the  first author was financed by the Ministry of Science and Higher Education of the Republic of Poland}
\author[K. Simik]{Katarzyna Simik}
\address{Katarzyna Simik, Institute of Mathematics, Pedagogical University, ul. Podchor\c a\.zych 2 \\30-084 Krak\'ow, Poland.}
\email{kasia.simik@interia.pl}
\author[A. Wicher]{Anna Wicher}
\address{Anna Wicher, Institute of Mathematics, Pedagogical University, ul. Podchor\c a\.zych 2 \\30-084 Krak\'ow, Poland.}
\email{ania123wch@gmail.com}
\title[$C$--normal operators]{$C$--normal operators}
\keywords{$C$--symmetric operators, $C$--skew--symmetric operators, Toeplitz operators, conjugate normal matrices, composition operators, truncated Toeplitz operators.}
\begin{document}
\maketitle
\begin{abstract} A new class of operators, larger than $C$-symmetric operators and different than normal one, named $C$--normal operators is introduced. Basic properties are given. Characterizations of this operators in finite dimensional spaces using a relation with conjugate normal matrices are presented. %among multiplication operators, Toeplitz operators, composition operators
Characterizations of Toeplitz operators and composition operators as $C$--normal operators are given. Bunches of examples are presented.
\end{abstract}

\section{Introduction and main definition}
%definicja conjugation C
Let $\h$ be a complex Hilbert space and denote by $L(\h)$ (by $LA(\h)$, respectively)  the algebra (the space, respectively) of all bounded linear (antilinear, respectively) operators in the space $\h$. The theory of selfadjoint and normal operators has been developed for many years. However, there are many operators which do not belong to those classes. On the other hand, a complex Hilbert space can be equipped with additional structure given by {\it conjugation} $C$, i.e. antilinear isometric involution; ($C\in LA(\h)$, $C^{2} = I $ and $\langle h, g\rangle = \langle Cg,Ch\rangle$ for all $ h,g \in \h$). Such a structure naturally appears in physics, see \cite{GPP}.
% definicja A#
On the other hand, conjugations are related to adjoint operators in the antilinear sense. Following  Wigner, (see \cite{Uh}),
for antilinear operator $X\in LA(\h)$, there is the unique antilinear operator $X^{\sharp }$ called the {\it antilinear adjoint} of $X$ such that
\begin{equation}\label{astar}
\langle Xx, y \rangle = \langle\overline{x, X^{\sharp }y} \rangle\qquad\text{ for all}\ x, y \in \h.
\end{equation}
The antilinear operator $X$ is called antilinear selfadjoint if $ X^{\sharp}=X $. Conjugations are the examples of such operators since $C^\sharp=C$.

 Having a conjugation
 $C$ on a space $\h$, an operator $T$  can be called {\it $C$-symmetric} if $CAC=A^{*}$, see \cite{GP}. It turned out, see  \cite[Lemma 5.1]{CKP}, that operator  $A\in L(\h)$ is $C$-symmetric if and only if
$AC$ is antilineary selfadjoint, i.e. $(AC)^{\sharp}=AC$. The $C$--symmetric  operators have applications in physics especially in the quantum mechanics and the spectral analysis; let us recall monograph \cite{NM} and paper \cite{BFGJ}. Authors send the reader to \cite{GPP} for more of {\it Mathematical and physical aspects of complex symmetric operators}. It is worth to mention that $C$--symmetric operators have got interesting properties which was intensively studied, see \cite{GP,GW}. For more references see \cite{GPP}. On the other hand, many natural operators belong to this class: truncated Toeplitz, Voltera operators, normal operators and many others.

% Fakt, że C# = C Since (i) and (ii) of Def. \ref{def1} give $C^{\sharp}=C$.

% Tu Pan Profesor Ptak ma to łądnie napisać

It is natural to search for the larger class of operators than $C$--symmetric ones.
 Having in mind classical selfadjoint and normal operators,
 it is natural to put forward the following
% definicja C-normalnosci
\begin{df}\label{def1} An operator  $N\in L(\h)$ is called {\it $C$-normal} if  \begin{equation}\label {dfcnor}
NC(NC)^{\sharp} = (NC)^{\sharp}(NC) .
\end{equation}
\end{df}

The definition refers to definition of normality for antilinear operators, see \cite{Uh}. Namely an antilinear operator $X\in LA(\h)$ is called {\it antylinearly normal} if
\begin{equation}\label{def3}X\,X^{\sharp } = X^{\sharp }X.\end{equation}
After stating the main definition the aim of the paper is to give equivalent conditions and basic properties of $C$--normal operator, Section 2. The next section is devoted to $C$--normal operators in finite dimensional Hilbert spaces. Section 3 shows the relation between $C$--normal operators and conjugate normal matrices; in fact we fully characterized the $C$--normal operators. The following sections concern finding a class of examples in various natural Hilbert spaces having a natural conjugations. Section 4 concerns multiplications operators in $L^2$ type spaces. Section 5 concerns Hardy space $H^2$ with some natural conjugation. Section 6 deals with composition operators. Especially interesting there are classes of $C$--normal operators being neither normal (in classical sense), nor $C$--symmetric, nor $C$--skew--symmetric. Theorems \ref{mainch2} and \ref{prop11} give  collections of such operators.
Authors think that this paper proves that $C$--normal operators form widely enough class of operators. On the other hand, we hope there will be many theorems and  properties of classical normal operators which can be moved to this new class and which should  be of the future investigations.

\section{Equivalent conditions and basic examples}
%definicja conjugation C
% Let us first state definitions used in the introduction.
%A \textit{conjugation} $C\colon \h \rightarrow \h$ is an antilinear, isometric involution, i.e. mapping satisfying
%\begin{itemize}
%\item[(i)] $\langle z, w\rangle = \langle Cw,Cz\rangle$ for all $ z,w \in \h,$
%
%\item[(ii)] $C^{2} = I $,
%
%\item[(iii)] $C(\alpha z + \beta w)= \bar{\alpha } C(z) + \bar{\beta }C(w), $ for all $\ z,w \in \h$ and $\alpha , \beta \in \mathbb{C}$.
%
%\end{itemize}

% definicja A#%
%The definition of $C$--normal operators  refers also to definition of normality for antilinear operators, see \cite{Uh}. Namely an antilinear operator $X\in LA(\h)$ is called {\it antylinearly normal} if
%\begin{equation}\label{def3}X\,X^{\sharp } = X^{\sharp }X.\end{equation}

Let $\h$ be a complex Hilbert space with conjugation $C$. An operator $A\in L(\h)$ is called $C${\it--symmetric} if  $CAC=A^*$. It is called $C${\it--skew--symmetric} if $CAC=-A^*$.
The immediate consequence of the definition of $C$--normality (Def. \ref{def1}) is that $C$--symmetric operators and $C$--skew--symmetric operators are $C$--normal.

The paper concentrates on examples of $C$--normal operators which are neither $C$--cymmetric nor $C$--skew--symmetric, but  let us recall two classes of $C$--symmetric operators, so also $C$--normal, to give a feeling to the reader  how large and important is the class of $C$--normal operators.

\begin{example}\label{ex31}Let ${C}$ be a  conjugation in $\mathbb{C}^n$ given by $C(z_1,\dots,z_n)=(\bar{z}_n,\bar{z}_{n-1},\dots,\bar{z}_1)$. The operators are $C$--symmetric if and only if its matrix is symmetric according to ''second diagonal''. (Notations are in Sections 3,4 and this is an immediate consequence of Lemma \ref{2.1}.)
\end{example}

%The second example of $C$--symmetric operators are defined on   function spaces.
Let  $m$ be the normalized Lebesgue measure on the unit circle $\mathbb{T}$ and let us consider space $L^2=L^2(\mathbb{T}, m)$. The  Hardy space $H^2$ is a subspace of those elements of $L^2$ which have negative Fourier coefficient equal to $0$. One of the most interesting examples of $C$-symmetric, hence also $C$--normal, operators are truncated Toeplitz operators (TTO). (See \cite{GMR} for more details about TTO.) 
\begin{example}\label{ex2.1} By Beurling's theorem all subspaces which are invariant for the unilateral shift $S$ in the Hardy space $H^2$ ($Sf(z)=zf(z)$ for  $f\in H^2$) can be written as $\theta H^2$, where $\theta$ is an inner function.  Consider, so-called, the  {\it model space} $K^2_\theta=H^2\ominus\theta H^2$ and the orthogonal projection $P_\theta\colon L^2\to K^2_\theta$.  A {\it truncated Toeplitz operator} $A^\theta_\varphi$ with a symbol $\varphi\in L^2$ is defined as
$$A^\theta_\varphi\colon D(A^\theta_\varphi)\subset K^2_\theta\to K^2_\theta; \quad A^\theta_\varphi f=P_\theta(\varphi f)$$
for $f\in D(A^\theta_\varphi)=\{f\in K^2_\theta : \varphi f\in L^2\}$. If $A^\theta_\varphi$ is bounded, it naturally extends to the operator in $L(\h)$.  The model space $K^2_\theta$ is equipped with natural conjugation $C_\theta$, $C_\theta f= \theta \bar z \bar f$ for $f\in K^2_\theta$.  Denote by $\mathcal{T}(\theta)$ the set of all bounded truncated Toeplitz operators on $K^2_\theta$. As it was shown in \cite{[Sa],GMR} operators from $\mathcal{T}(\theta)$ are $C_\theta$--symmetric, hence $C_\theta$--normal.
\end{example}
% warunki równoważne C-normalności
We have the following equivalent conditions:
\begin{theorem} \label{twCnor}Let $C$ be a conjugation on $\h$ and
 let $N\in L(\h)$. The followings conditions are equivalent:
\begin{enumerate}
\item $N$ is C-normal,
\item $N^*$ is C-normal,
\item $CNC$ is C-normal,
\item $CN^*C$ is C-normal,
\item $CNN^{*} = N^{*}NC$,
\item $CN^{*}N = NN^{*}C$,
\item $CN(CN)^{\sharp} = (CN)^{\sharp}(CN)$,
\item $||NCh|| = ||N^{*}h||$,
\item $||N^{*}Ch|| = ||Nh||$,
\item $N_+\overset{df}=\tfrac 12(CN+N^*C)$ and $N_{-}\overset{df}=\tfrac 12(CN-N^*C)$ commute,
\item $N^+\overset{df}=\tfrac 12(NC+CN^*)$ and $N^{-}\overset{df}=\tfrac 12(NC-CN^*)$ commute,.
\end{enumerate}
\end{theorem}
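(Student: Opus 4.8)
The plan is to collapse the whole list onto a single operator identity. The one genuine computation is the antilinear adjoints of $NC$ and $CN$: from the defining relation \eqref{astar}, $C^{2}=I$, and $\langle h,g\rangle=\langle Cg,Ch\rangle$ I would check that
\[
(NC)^{\sharp}=CN^{*}\qquad\text{and}\qquad (CN)^{\sharp}=N^{*}C,
\]
for instance $\langle NCx,y\rangle=\langle Cx,N^{*}y\rangle=\overline{\langle x,CN^{*}y\rangle}$, and likewise for $CN$. Substituting the first formula into \eqref{dfcnor} and cancelling $C^{2}=I$ shows that (1) is equivalent to $NN^{*}=CN^{*}NC$. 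Multiplying this identity on the left by $C$ gives $CNN^{*}=N^{*}NC$, i.e. (5); conjugating it by $C$ on both sides gives $N^{*}N=CNN^{*}C$, and then left multiplication by $C$ gives $CN^{*}N=NN^{*}C$, i.e. (6); and substituting $(CN)^{\sharp}=N^{*}C$ into the identity displayed in (7) yields exactly $CNN^{*}C=N^{*}N$. Thus (1), (5), (6), (7) are one equation written four ways.

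Next I would dispose of (2), (3), (4) by substitution into criterion (5). Since $CNC$ is bounded and linear with $(CNC)^{*}=CN^{*}C$ (the same sort of computation with $C$), writing (5) for $N^{*}$ in place of $N$ reproduces (6), and writing (5) for $CNC$ in place of $N$ and simplifying the $C$'s again reproduces (6); finally (4) is the assertion that $(CNC)^{*}$ is $C$-normal, which by the already established equivalence (1)$\Leftrightarrow$(2), applied to $CNC$, is the same as (3). Hence (2), (3), (4) are each equivalent to (1).

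For the norm conditions I would pass to quadratic forms. Using $\langle x,y\rangle=\langle Cy,Cx\rangle$ one gets $\|NCh\|^{2}=\langle N^{*}NCh,Ch\rangle=\langle h,CN^{*}NCh\rangle$ while $\|N^{*}h\|^{2}=\langle NN^{*}h,h\rangle$, so by polarization (8) is equivalent to $CN^{*}NC=NN^{*}$, which is (1); condition (9) is the same with $N$ and $N^{*}$ interchanged and lands on (7). For (10) and (11), note that $N_{\pm}$ and $N^{\pm}$ are antilinear, so the commutators $[N_{+},N_{-}]$ and $[N^{+},N^{-}]$ are linear; expanding them, the "square" terms ($CNCN$, $N^{*}CN^{*}C$, respectively $NCNC$, $CN^{*}CN^{*}$) cancel and the cross terms collapse via $C^{2}=I$, leaving
\[
[N_{+},N_{-}]=\tfrac12\bigl(N^{*}N-CNN^{*}C\bigr)\qquad\text{and}\qquad [N^{+},N^{-}]=\tfrac12\bigl(CN^{*}NC-NN^{*}\bigr),
\]
so (10) is equivalent to (7) and (11) to (1), closing all the equivalences.

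None of the steps is deep; the only places that call for care are the bookkeeping of the complex conjugates produced by antilinearity when one computes $(NC)^{\sharp}$, $(CN)^{\sharp}$, and $(CNC)^{*}$, the correct side on which to multiply by $C$, and the passage between quadratic forms and operator equalities in (8)--(9). I do not anticipate a real obstacle beyond keeping this bookkeeping straight.
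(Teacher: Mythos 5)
Your proof is correct and, for the equivalences the paper actually writes out ((1)$\Leftrightarrow$(5) and (1)$\Leftrightarrow$(6)), it follows exactly the paper's route: compute $(NC)^{\sharp}=CN^{*}$, reduce $C$-normality to $NN^{*}=CN^{*}NC$, and multiply by $C$ on the appropriate side. The paper leaves the remaining items to the reader, so your handling of (2)--(4) and (7)--(11) goes beyond what is printed, but it checks out: the commutators in (10)--(11) do collapse to $\tfrac12(N^{*}N-CNN^{*}C)$ and $\tfrac12(CN^{*}NC-NN^{*})$, and the polarization step for (8)--(9) is valid.
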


\begin{proof} We prove, for instance,  equivalences (1) and (5), (1) and (6). Let's assume (1). From  \eqref{astar} and  \eqref{dfcnor} we have following:
$$NCCN^{*} = CN^{*}NC,$$ and from Def. \ref{def1}
$$NN^{*} = CN^{*}NC.$$
Then, by covering the above equation from the left side by $C$, we get a condition (5). Furthermore, by covering the above equation from the right side by $C$ we get a condition (6).
%All of the transitions are equivalent.
%From the condition 4 for each mapping $h\colon \h \rightarrow \h$ we have following:
%$$ \langle CNN^{*}C h, h \rangle ^{2} =  \langle N^{*}N  h, h \rangle ^{2}.$$
%By using properties of  antilinear adjoint
%on the left side of the equation and properties of operator $A^{*}$ on the right side of the equation, we have
%$$ \langle N^{*}C h, N^{*}Ch \rangle ^{2} =  \langle N  h, Nh \rangle ^{2}.$$
%Finally we get the condition 6.
\end{proof}

\begin{lemma}
Let ${C}$ be a conjugation in $\h$. If $N\in L(\h)$ is  ${C}$-normal then $N_L=CNCN$ and $N_R=NCNC$ are normal.
\end{lemma}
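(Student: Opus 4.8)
The plan is to reduce the statement to two operator identities coming from $C$-normality, after which the normality of $N_L$ and $N_R$ falls out by a one-line manipulation with ordinary linear adjoints.

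First I would introduce the auxiliary operator $M:=CNC$. Since $C$ is an antilinear isometric involution and $N$ is bounded linear, $M$ is a bounded \emph{linear} operator, and a short computation with the conjugation identity $\langle Cu,v\rangle=\langle Cv,u\rangle$ (applied twice) gives $(CNC)^{*}=CN^{*}C$, i.e.\ $M^{*}=CN^{*}C$. This is the one place where one must be a little careful about antilinearity; from here on only the usual rules for adjoints of bounded linear operators are used.

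Next I would rewrite the hypothesis in terms of $M$. By Theorem~\ref{twCnor}, $C$-normality of $N$ is equivalent to condition~(5), $CNN^{*}=N^{*}NC$, and to condition~(6), $CN^{*}N=NN^{*}C$. Multiplying these on the right by $C$ and using $C^{2}=I$ gives $CNN^{*}C=N^{*}N$ and $CN^{*}NC=NN^{*}$, which in terms of $M$ say precisely
\begin{equation*}
MM^{*}=CNN^{*}C=N^{*}N,\qquad M^{*}M=CN^{*}NC=NN^{*}.
\end{equation*}
Now $N_{L}=CNCN=MN$ and $N_{R}=NCNC=NM$ are products of bounded linear operators, so $N_{L}^{*}=N^{*}M^{*}$, and substituting the two identities above,
\begin{align*}
N_{L}N_{L}^{*}&=M(NN^{*})M^{*}=M(M^{*}M)M^{*}=(MM^{*})^{2}=(N^{*}N)^{2},\\
N_{L}^{*}N_{L}&=N^{*}(M^{*}M)N=N^{*}(NN^{*})N=(N^{*}N)^{2},
\end{align*}
so $N_{L}N_{L}^{*}=N_{L}^{*}N_{L}$ and $N_{L}$ is normal. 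For $N_{R}$ one may either repeat the computation (it yields $N_{R}N_{R}^{*}=N_{R}^{*}N_{R}=(NN^{*})^{2}$) or simply note that $N_{R}=C\,N_{L}\,C$ and that a conjugation preserves normality, since $(CTC)^{*}(CTC)=CT^{*}TC$ and $(CTC)(CTC)^{*}=CTT^{*}C$ show $CTC$ is normal iff $T$ is.

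I do not expect a real obstacle here: the content is the initial bookkeeping, namely computing $(CNC)^{*}=CN^{*}C$ correctly and selecting the convenient form of the $C$-normality hypothesis from Theorem~\ref{twCnor}. Once the identities $MM^{*}=N^{*}N$ and $M^{*}M=NN^{*}$ are available, normality of $N_L$ and $N_R$ collapses to the displayed substitutions.
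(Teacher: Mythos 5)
Your proof is correct. The paper states this lemma without proof, and your argument --- passing to $M=CNC$, extracting $MM^{*}=N^{*}N$ and $M^{*}M=NN^{*}$ from conditions (5) and (6) of Theorem~\ref{twCnor}, and then computing $N_{L}N_{L}^{*}=N_{L}^{*}N_{L}=(N^{*}N)^{2}$ and $N_{R}N_{R}^{*}=N_{R}^{*}N_{R}=(NN^{*})^{2}$ --- is surely the intended direct verification; all the antilinear bookkeeping, in particular $(CNC)^{*}=CN^{*}C$ and the observation that $N_{R}=CN_{L}C$, is handled correctly.
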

\begin{example}The reverse implication is not true, which follows from the following example. Let $\h=\mathbb{C}^3$, $C(z_1,z_2,z_3)=(\bar z_3,\bar z_2, \bar z_1)$ and $$ N =
\begin{bmatrix}
0&1&0\\
0&0&0\\
0&0&0
\end{bmatrix}.
$$
\end{example}

Next, we will present some results on relations between $C$--normal operators and unitary ones. %It is immediate that all unitary operators are $C$--normal for any conjugation $C$.

\begin{proposition}
Let ${C}$ be a conjugation in $\h$ and $U\in L(\h)$ be a unitary ope\-ra\-tor, then:
\begin{enumerate}
\item $U$ is $C$--normal,
  \item $U C U^*$ is a conjugation,
  \item $CUC$ is unitary,
  \item if $T$ is ${C}$-symmetric then $UTU^*$ is $UC U^*$-symmetric,
  \item if $N$ is ${C}$-normal then $UNU^*$ is $UC U^*$-normal,
  \item moreover if $U$ is ${C}$-symmetric then
  \begin{enumerate}
\item if $T$ is ${C}$-symmetric then $UTU^*$ is ${C}$-symmetric,
 \item if $N$ is ${C}$-normal then $UNU^*$ is ${C}$-normal.
  \end{enumerate}
\end{enumerate}

\end{proposition}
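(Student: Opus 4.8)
The plan is to read off items (1)--(3) directly from the definitions, to derive (4)--(5) by conjugating the defining identities by the unitary $U$, and to obtain (6) by feeding the extra hypothesis $CUC=U^{*}$ back into those computations. Throughout I will use the characterizations in Theorem~\ref{twCnor}, particularly the symmetric-looking form $CNN^{*}=N^{*}NC$ of $C$-normality (criterion (5)) and the norm form $\|NCh\|=\|N^{*}h\|$ (criterion (8)).

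For (1), note that $CUU^{*}=C=U^{*}UC$, which is exactly criterion (5) of Theorem~\ref{twCnor} for $N=U$; alternatively $\|UCh\|=\|Ch\|=\|h\|=\|U^{*}h\|$ gives criterion (8). For (2), the operator $UCU^{*}$ is antilinear, satisfies $(UCU^{*})^{2}=UC(U^{*}U)CU^{*}=UC^{2}U^{*}=I$, and obeys $\langle UCU^{*}g,UCU^{*}h\rangle=\langle CU^{*}g,CU^{*}h\rangle=\langle U^{*}h,U^{*}g\rangle=\langle h,g\rangle$ for all $h,g$, hence is a conjugation. For (3), $CUC$ is linear, it is isometric since $\|CUCh\|=\|UCh\|=\|Ch\|=\|h\|$, and it is surjective since $C$ and $U$ are bijective; a surjective isometry of a Hilbert space is unitary.

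For (4), set $\widetilde C=UCU^{*}$, a conjugation by (2). Using $U^{*}U=I$ and $CTC=T^{*}$ one gets $\widetilde C\,(UTU^{*})\,\widetilde C=U(CTC)U^{*}=UT^{*}U^{*}=(UTU^{*})^{*}$, so $UTU^{*}$ is $\widetilde C$-symmetric. For (5), apply criterion (5) of Theorem~\ref{twCnor} to $M=UNU^{*}$ with the conjugation $\widetilde C$: since $MM^{*}=UNN^{*}U^{*}$ and $M^{*}M=UN^{*}NU^{*}$, the required identity $\widetilde C\,MM^{*}=M^{*}M\,\widetilde C$ collapses, after cancelling the outer $U$ and $U^{*}$, to $CNN^{*}=N^{*}NC$, which holds because $N$ is $C$-normal.

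For (6), I would add the hypothesis $CUC=U^{*}$, equivalently the intertwining relations $CU=U^{*}C$ and $UC=CU^{*}$, and revisit the computations of (4)--(5), this time using these relations to move each $C$ past the factors $U$ and $U^{*}$ so that it comes to flank $T$ (respectively $N$ and $N^{*}$) directly; then $CTC=T^{*}$, respectively criterion (5) of Theorem~\ref{twCnor} for $N$, finishes the reduction to $C$ itself. I expect this to be the main obstacle: unlike in (4)--(5), the factors $U^{*}U=I$ are no longer what makes things cancel, and the conjugation $\widetilde C=UCU^{*}$ is not equal to $C$ in general, so the passage from $\widetilde C$-symmetry (resp.\ $\widetilde C$-normality) to $C$-symmetry (resp.\ $C$-normality) must be extracted entirely from the symmetry of $U$; the careful bookkeeping of the antilinear relations $CU=U^{*}C$ and $UC=CU^{*}$ is where all the work lies.
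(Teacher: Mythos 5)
The paper states this proposition without any proof, so there is nothing to compare your approach against; your arguments for items (1)--(5) are correct and complete, and they are the natural ones: (1) from criterion (5) (or (8)) of Theorem~\ref{twCnor}, (2) and (3) by direct verification of the axioms, and (4)--(5) by conjugating the relevant identities by $U$ and cancelling $U^{*}U=I$.

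Item (6), however, you only sketch, and your instinct that ``this is where all the work lies'' is right in the strongest possible sense: the reduction you propose cannot be carried out, because item (6) is false as stated. Take $\h=\C^{2}$ with $C(z_{1},z_{2})=(\bar z_{1},\bar z_{2})$, so that $CAC=\overline{A}$ (entrywise conjugate) and $A$ is $C$-symmetric exactly when its matrix is symmetric. Let $U=\left[\begin{smallmatrix} i&0\\0&1\end{smallmatrix}\right]$ and $T=\left[\begin{smallmatrix}0&1\\1&0\end{smallmatrix}\right]$; both are $C$-symmetric and $U$ is unitary, yet $UTU^{*}=\left[\begin{smallmatrix}0&i\\-i&0\end{smallmatrix}\right]$ is not a symmetric matrix, so (6a) fails. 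Likewise $N=\left[\begin{smallmatrix}1&1\\1&0\end{smallmatrix}\right]$ is $C$-symmetric, hence $C$-normal, but $M=UNU^{*}=\left[\begin{smallmatrix}1&i\\-i&0\end{smallmatrix}\right]$ satisfies $\overline{MM^{*}}=\left[\begin{smallmatrix}2&-i\\i&1\end{smallmatrix}\right]\neq\left[\begin{smallmatrix}2&i\\-i&1\end{smallmatrix}\right]=M^{*}M$, so $M$ is not $C$-normal and (6b) fails. The obstruction appears exactly where you expected it: from $CUC=U^{*}$ one gets $C(UTU^{*})C=(CUC)(CTC)(CU^{*}C)=U^{*}T^{*}U$, whereas $(UTU^{*})^{*}=UT^{*}U^{*}$, and these agree only when $U^{2}$ commutes with $T^{*}$; equivalently, by your own item (4), $UTU^{*}$ is $UCU^{*}$-symmetric and $UCU^{*}=U^{2}C$, which equals $C$ only when $U^{2}=I$. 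The hypothesis that actually makes (6) work is $UC=CU$ (i.e.\ $CUC=U$), for then $UCU^{*}=C$ and both (6a) and (6b) follow at once from (4) and (5). Alternatively, keeping the hypothesis $CUC=U^{*}$, the correct conclusions concern $UTU$ and $UNU$ rather than $UTU^{*}$ and $UNU^{*}$: indeed $C(UTU)C=(CUC)(CTC)(CUC)=U^{*}T^{*}U^{*}=(UTU)^{*}$, and the analogous computation with criterion (5) of Theorem~\ref{twCnor} shows $UNU$ is $C$-normal. You should either add one of these corrections or record the counterexample.
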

\begin{proposition}Let $C$ be a conjugation in $\h$ and let $U\in L(\h)$ be unitary operator. An operator $N$ is $C$--normal if and only if  $U^*NCUC$ ($U^*C\,N\,UC$, respectively) is $C$--normal.
\end{proposition}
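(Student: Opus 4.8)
The plan is to reduce the whole statement to the notion of antilinear normality and to exploit the fact, built into Definition \ref{def1} and recorded again in condition (7) of Theorem \ref{twCnor}, that $N$ is $C$--normal precisely when the antilinear operator $NC$ (equivalently $CN$) is antilinearly normal in the sense of \eqref{def3}. Thus everything will follow from one observation: \emph{antilinear normality is preserved under conjugation by a unitary}, i.e. if $U\in L(\h)$ is unitary and $X\in LA(\h)$, then $U^*XU$ is antilinearly normal if and only if $X$ is.

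To prove that observation I would first record the composition rule for the antilinear adjoint: for linear $A$ and antilinear $X$ one has $(AX)^\sharp=X^\sharp A^*$ and $(XA)^\sharp=A^*X^\sharp$, both immediate from \eqref{astar}. Applying this twice to $U^*XU=(U^*X)U$ gives $(U^*X)^\sharp=X^\sharp(U^*)^*=X^\sharp U$ and hence $(U^*XU)^\sharp=U^*(U^*X)^\sharp=U^*X^\sharp U$. Therefore $(U^*XU)(U^*XU)^\sharp=U^*XX^\sharp U$ and $(U^*XU)^\sharp(U^*XU)=U^*X^\sharp XU$, and since $U^*=U^{-1}$ these two operators coincide if and only if $XX^\sharp=X^\sharp X$, which is the claimed equivalence.

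Now I would assemble the proposition. Put $M=U^*NCUC$; since $C^2=I$ we have $MC=U^*NCU=U^*(NC)U$. By Definition \ref{def1}, $M$ is $C$--normal iff $MC$ is antilinearly normal, which by the observation holds iff $NC$ is antilinearly normal, i.e. iff $N$ is $C$--normal. For the parenthetical claim set $M'=U^*C\,N\,UC$, so that $M'C=U^*CNU=U^*(CN)U$; the same chain, now invoking condition (7) of Theorem \ref{twCnor} (which states that antilinear normality of $CN$ is equivalent to $C$--normality of $N$), yields that $M'$ is $C$--normal iff $N$ is. The only point demanding attention throughout is bookkeeping — tracking which factors are linear and which antilinear so the adjoint composition rules are used correctly, and invoking $C^2=I$ to simplify $MC$ and $M'C$ — so no essential obstacle is expected.
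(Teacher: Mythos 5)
Your proposal is correct and follows essentially the same route as the paper: the paper also reduces the statement to the fact that antilinear normality is preserved under the map $X\mapsto U^*XU$ (its Lemma \ref{lem0.2}), proved by the same computation $(U^*XU)^\sharp=U^*X^\sharp U$, and then applies it to $X=NC$ and $X=CN$. Your write-up merely makes explicit the adjoint composition rules and the reverse implication, both of which the paper leaves implicit.
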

It is a consequence of the following
\begin{lemma}\label{lem0.2}
Let $X\in LA(\h)$ and let $U\in L(\h)$ be unitary operator. If  $X$ is { antilinearly normal} then
 $U^*XU$ is also  antilinearly normal.
\end{lemma}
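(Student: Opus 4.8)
The plan is to first record how the antilinear adjoint behaves under composition with bounded linear operators, and then to carry out a short conjugation computation. For $A,B\in L(\h)$ and $X\in LA(\h)$ the maps $X^{\sharp}A^{*}$ and $B^{*}X^{\sharp}$ are again antilinear (antilinear composed with linear), and from \eqref{astar} together with the uniqueness in Wigner's theorem one obtains the two identities $(AX)^{\sharp}=X^{\sharp}A^{*}$ and $(XB)^{\sharp}=B^{*}X^{\sharp}$. Indeed, $\langle AXx,y\rangle=\langle Xx,A^{*}y\rangle=\overline{\langle x,X^{\sharp}A^{*}y\rangle}$ for all $x,y\in\h$, which forces the first identity; the second is checked the same way. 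Combining them for $Y:=U^{*}XU$ (taking $A=U^{*}$, $B=U$) gives $Y^{\sharp}=B^{*}X^{\sharp}A^{*}=U^{*}X^{\sharp}(U^{*})^{*}=U^{*}X^{\sharp}U$, using $(U^{*})^{*}=U$.

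Now the conclusion follows by direct substitution. Since $U$ is unitary, $UU^{*}=I$, so
\[
YY^{\sharp}=U^{*}XU\,U^{*}X^{\sharp}U=U^{*}(XX^{\sharp})U,\qquad
Y^{\sharp}Y=U^{*}X^{\sharp}U\,U^{*}XU=U^{*}(X^{\sharp}X)U.
\]
By hypothesis $X$ is antilinearly normal, i.e. $XX^{\sharp}=X^{\sharp}X$, hence $YY^{\sharp}=Y^{\sharp}Y$; that is, $U^{*}XU$ is antilinearly normal, as claimed.

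I do not expect a genuine obstacle: the only care required is bookkeeping with the order of the factors, since $X$ is antilinear and must stay sandwiched between $U^{*}$ on the left and $U$ on the right throughout; no cancellation beyond $UU^{*}=I$ and no estimate is involved. Conceptually, the lemma merely says that the unitary ``similarity'' $X\mapsto U^{*}XU=U^{-1}XU$ respects the $\sharp$-structure, and therefore preserves antilinear normality, exactly as ordinary unitary similarity preserves normality of linear operators.
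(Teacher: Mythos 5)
Your proof is correct and follows essentially the same route as the paper: both establish (explicitly in your case, implicitly in the paper's) the composition rules $(AX)^{\sharp}=X^{\sharp}A^{*}$ and $(XB)^{\sharp}=B^{*}X^{\sharp}$, deduce $(U^{*}XU)^{\sharp}=U^{*}X^{\sharp}U$, and then cancel $UU^{*}=I$ in the middle. Your write-up is in fact slightly cleaner than the paper's displayed chain, which contains a couple of typographical slips in the intermediate expressions.
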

\begin{proof}The direct computation shows that
\begin{multline}
(U^*XU)\, (U^*XU)^{\sharp}=U^*XU U^*(U^*X)^{\sharp}=U^*XX^{\sharp}U\\=U^*X^{\sharp}XU=U^*X^{\sharp}U\,U^*X^{\sharp}U=(U^*XU)^{\sharp}(U^*XU).
\end{multline}
\end{proof}

Let $h,g\in \h$ then, by $h\otimes g\in L(\h)$ we will denote rank one operator given by $(h\otimes g)x= \langle x,g\rangle\, h$ for $x\in \h$.

\begin{lemma}\label{lem22}
Let $C$ be a conjugation in $\h$. Let $x,y,h,g\in \h$. Then
\begin{enumerate}
  \item $(h\otimes g)^*=g\otimes h$,
  \item $C(h\otimes g)C=Ch\otimes Cg$,
  \item $(h\otimes g)(x\otimes y)=\langle x,g\rangle\, h\otimes y$.
\end{enumerate}
\end{lemma}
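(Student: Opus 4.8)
The plan is to establish all three identities by direct computation, testing the operators against an arbitrary vector. No machinery beyond the antilinearity of $C$, the antilinearity of the inner product in its second argument, and the defining relation $\langle a,b\rangle=\langle Cb,Ca\rangle$ of a conjugation is needed; each item is a short verification. For part (1), I would compute, for arbitrary $x,z\in\h$, that $\langle (h\otimes g)x,z\rangle=\langle\langle x,g\rangle h,z\rangle=\langle x,g\rangle\langle h,z\rangle$, while $\langle x,(g\otimes h)z\rangle=\langle x,\langle z,h\rangle g\rangle=\overline{\langle z,h\rangle}\,\langle x,g\rangle=\langle h,z\rangle\langle x,g\rangle$. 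Since these agree for all $x,z$, the operator $g\otimes h$ satisfies the defining property of the adjoint, so $(h\otimes g)^*=g\otimes h$.

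For part (2), I would apply $C(h\otimes g)C$ to an arbitrary $x\in\h$. Using antilinearity of $C$ together with the definition of the rank one operator, $C(h\otimes g)Cx=C\bigl(\langle Cx,g\rangle\,h\bigr)=\overline{\langle Cx,g\rangle}\,Ch$. Then the conjugation identity (with $C^2=I$) gives $\overline{\langle Cx,g\rangle}=\langle g,Cx\rangle=\langle C(Cx),Cg\rangle=\langle x,Cg\rangle$, so that $C(h\otimes g)Cx=\langle x,Cg\rangle\,Ch=(Ch\otimes Cg)x$. As $x$ was arbitrary, $C(h\otimes g)C=Ch\otimes Cg$.

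For part (3), I would compose on an arbitrary $z\in\h$: $(h\otimes g)(x\otimes y)z=(h\otimes g)\bigl(\langle z,y\rangle\,x\bigr)=\langle z,y\rangle\,(h\otimes g)x=\langle z,y\rangle\langle x,g\rangle\,h=\langle x,g\rangle\,\bigl(\langle z,y\rangle\,h\bigr)=\langle x,g\rangle\,(h\otimes y)z$, which yields $(h\otimes g)(x\otimes y)=\langle x,g\rangle\,h\otimes y$. There is no real obstacle here; the only point demanding any care is bookkeeping of which slot of the inner product is conjugate linear in (1) and the correct invocation of the conjugation identity in (2), and both are routine.
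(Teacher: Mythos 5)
Your proof is correct, and it is exactly the routine direct verification the paper has in mind (the lemma is stated without proof there, being a "direct calculation"); all three computations check out with the stated conventions on the inner product and the antilinearity of $C$.
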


Let $\h$ be a complex Hilbert space with conjugation $C$. Direct calculations show that all $C$--normal rank--one operators have the form $h\otimes Ch$, where $h\in \h$. This operators are $C$--cymmetric, see \cite{KP}. Hence there can be found interesting examples among rank--two or rank--three operators. Let $\dim \h \geqslant 3$. Then, by \cite[Lemma 2.1]{GPP}, there is an orthonormal basis $\{e_k\}$ such that $Ce_k=e_k$. Denote $h=\frac 1{\sqrt 2}(e_1+ie_2)$, $g=e_3$ then $h,Ch,g$ are orthonormal.
Let us consider two operators
\begin{align}A_1&=h \otimes h + h \otimes Ch+Ch\otimes h-Ch \otimes Ch,\\
A_2&=h \otimes Ch+g\otimes h+2\,g\otimes g+2\,Ch\otimes h-Ch \otimes g.
\end{align}
 A direct calculation, using Lemma \ref{lem22}, shows that operators $A_1$ and  $A_2$ are neither $C$--symmetric, nor $C$--skew--symmetric,  but they are $C$-normal.
 Moreover, the operator
 $A_2$ is neither  selfadjoint nor normal.

\section {Finite dimensional case }

Let $ \mathbf{M}_n$ denote the algebra of all $n\times n$ complex matrices. Except the algebra structure, which was recalled, there are some operations on matrices which are defined as follows; let    $M~=~[a_{jk}]\in \mathbf{M}_n$, then we denote
$$\overline M~=~[\bar a_{jk}],\quad M^t~=~[a_{kj}],\quad M^*~=~[\bar a_{kj}],\quad M^s~=~[a_{n-j+1\;\; n-k+1}].$$%_{\substack{i=1,\dots,n\\j=1,\dots,n}}.$$
We will call the matrix {\it unitary} if its columns (or rows) form an orthonormal basis.

 Let us recall relations between antilinear operators and matrices.
Let $X\in LA(\mathbb{C}^{n})$. Let $e_{1}, \hdots , e_{n} $ be an orthonormal basic in $ \mathbb{C}^{n} $. There is a matrix $M_{X}~=~[a_{jk}]$  %_{\substack{k\in \{1, \hdots , N \} \\ j\in \{ 1,\hdots , N\} }}$,
 such that for any $x=\sum_{k=1}^{n}  \langle x, e_{k} \rangle e_{k}\in \mathbb{C}^n$ we have
$$
Xx = \sum\limits_{j=1}^{n}\big(\sum\limits_{k=1}^{n}a_{jk}\ \overline{\langle x, e_{k} \rangle} \big) e_{j}.
$$
Moreover, $a_{jk} = \langle Xe_{k},e_{j} \rangle $.
The matrix $M_X$ will be called a {\it matrix representation} of antilinear operator $X$ as to basis $e_{1}, \hdots , e_{n}$. (The standard matrix for linear operator $T\in L(\mathbb{C}^{n})$ is also denoted by $M_T$.) The following properties hold.
%
%
%
%Let us recall relations between antilinear operators and matrices.
%
%\begin{lemma}
%Let $A\in LA(\mathbb{C}^{n})$. Let $e_{1}, \hdots , e_{N} $ be an orthonormal basic in $ \mathbb{C}^{n} $. There is a matrix $M_{A}~=~[a_{jk}]_{\substack{k\in \{1, \hdots , N \} \\ j\in \{ 1,\hdots , N\} }}$, such that for any $x=\sum_{k=1}^{n}  \langle x, e_{k} \rangle e_{k}$ we have
%$$
%Ax = \sum\limits_{j=1}^{n}(\sum\limits_{k=1}^{n}a_{jk}\ \overline{\langle x, e_{k} \rangle} )e_{j}.
%$$
%Moreover $a_{jk} = \langle Ae_{k},e_{j} \rangle $.
%\end{lemma}
%The matrix $M_A$ will be called a {\it matrix representation} of antilinear operator $A$ as to basis $e_{1}, \hdots , e_{N}$. This representation have following properties.

\begin{lemma}\label{cacsprzezone}
 Let $X,Y \in LA(\mathbb{C}^{n})$ and $T\in {L}(\mathbb{C}^{n})$. Let $M_{X}, M_{Y}, M_{T} $ be a matrix representation of operators $X$, $Y$, $T$ as to certain orthonormal basis $e_{1}, \hdots , e_{n}$, respectively. Then
\begin{enumerate}
\item[(1)] $M_{XT}=M_{X} \  \overline{M}_{T}$
\item[(2)] $M_{TX} = M_{T} \ M_{X}$
\item[(3)] $M_{XY} = M_{X} \ \overline{M}_{Y}$
\item[(4)]
 $M_{X^{\sharp }} = M_{X}^t$ .
\end{enumerate}
\end{lemma}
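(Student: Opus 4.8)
The plan is to verify each of the four identities by a direct computation, working from the definitions of the antilinear adjoint and of the matrix representation of an antilinear operator. The basic data we use throughout: if $X\in LA(\mathbb{C}^n)$ has matrix $M_X=[a_{jk}]$ with respect to an orthonormal basis $e_1,\dots,e_n$, then $a_{jk}=\langle Xe_k,e_j\rangle$, and $X$ acts by $Xx=\sum_j\big(\sum_k a_{jk}\overline{\langle x,e_k\rangle}\big)e_j$. For a linear $T$ with matrix $M_T=[t_{jk}]$ we have the usual $t_{jk}=\langle Te_k,e_j\rangle$ and $Tx=\sum_j\big(\sum_k t_{jk}\langle x,e_k\rangle\big)e_j$. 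The one place to be careful is the conjugate-linearity: composing an antilinear map with another map interchanges which factor gets conjugated, and this is exactly what produces the bars in (1)--(3).

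For (1), $M_{XT}=M_X\overline{M_T}$: compute the $(j,k)$ entry $\langle XTe_k,e_j\rangle$. Write $Te_k=\sum_m t_{mk}e_m$; then, since $X$ is antilinear, $XTe_k=\sum_m \overline{t_{mk}}\,Xe_m$, so $\langle XTe_k,e_j\rangle=\sum_m \overline{t_{mk}}\,\langle Xe_m,e_j\rangle=\sum_m a_{jm}\overline{t_{mk}}$, which is the $(j,k)$ entry of $M_X\overline{M_T}$. For (2), $M_{TX}=M_TM_X$: here the outer map $T$ is linear, so $TXe_k=\sum_m a_{mk}Te_m=\sum_{j,m}a_{mk}t_{jm}e_j$, giving the $(j,k)$ entry $\sum_m t_{jm}a_{mk}$, i.e.\ ordinary matrix multiplication $M_TM_X$. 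Part (3), $M_{XY}=M_X\overline{M_Y}$, is the same computation as (1) with the antilinear $Y$ in place of the linear $T$: writing $Ye_k=\sum_m b_{mk}e_m$ and using antilinearity of $X$ gives $XYe_k=\sum_m\overline{b_{mk}}Xe_m$, hence the $(j,k)$ entry $\sum_m a_{jm}\overline{b_{mk}}$; one only needs to note that the formula for $Ye_k$ in terms of the matrix entries is the same whether $Y$ is linear or antilinear, since $e_k$ is a basis vector and $\overline{\langle e_k,e_\ell\rangle}=\langle e_k,e_\ell\rangle$.

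For (4), $M_{X^\sharp}=M_X^t$: by the defining relation \eqref{astar}, $\langle Xe_k,e_j\rangle=\langle\overline{e_k,X^\sharp e_j}\rangle=\overline{\langle e_k,X^\sharp e_j\rangle}=\langle X^\sharp e_j,e_k\rangle$. The left side is $a_{jk}=(M_X)_{jk}$ and the right side is $(M_{X^\sharp})_{kj}$, so $(M_{X^\sharp})_{kj}=(M_X)_{jk}$, i.e.\ $M_{X^\sharp}=M_X^t$ (no conjugation, in contrast to the linear adjoint which gives $M^*$). The only genuinely delicate point in the whole lemma is keeping the notational conventions straight: making sure that in each product the conjugation lands on the factor whose variable slot is ``fed'' into an antilinear map, and reading off entries in the correct order $(j,k)$ versus $(k,j)$. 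I would state the computations compactly and point out explicitly that the appearance of $\overline{M_T}$ and $\overline{M_Y}$ in (1) and (3) but not in (2), and the absence of a conjugate in (4), are precisely the manifestations of antilinearity; there is no real obstacle beyond this bookkeeping.
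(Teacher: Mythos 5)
Your computations are correct: each entry-by-entry verification matches the paper's conventions ($a_{jk}=\langle Xe_k,e_j\rangle$, $M^t=[a_{kj}]$), and you correctly track where antilinearity forces a conjugate and where it does not. The paper states this lemma without proof, and your direct bookkeeping argument is exactly the routine verification the authors intend, so nothing further is needed.
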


There is quite large literature concerning conjugate normal matrices.
\begin{df}[\cite{FI2}]
Matrix $M\in \mathbf{M}_{n}(\mathbb{C})$ is \textit{conjugate-normal} if
$$
MM^{*} = \overline{M^{*}M}.
$$
\end{df}
The theorem bellow shows the relationships between antilinearly normal operators and conjugate normal matrices.

\begin{theorem} \label{thm_1.3}Let $X\in {LA}(\mathbb{C}^{n})$. Then $X$ is antilineary normal if and only if the matrix $ M_X$ is conjugate-normal.
\end{theorem}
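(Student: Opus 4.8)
The plan is to translate the antilinear normality condition $XX^{\sharp}=X^{\sharp}X$ into a matrix identity using Lemma \ref{cacsprzezone}, and then read off that this identity is exactly conjugate-normality. Fix an orthonormal basis $e_1,\dots,e_n$ and write $M=M_X$ for the matrix representation of $X$ with respect to it; recall from Lemma \ref{cacsprzezone}(4) that $M_{X^{\sharp}}=M^t$.

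First I would compute the matrix representation of the antilinear operator $XX^{\sharp}$. Since $X^{\sharp}$ is antilinear with $M_{X^{\sharp}}=M^t$, and $X$ is antilinear, part (3) of Lemma \ref{cacsprzezone} gives $M_{XX^{\sharp}}=M_X\,\overline{M_{X^{\sharp}}}=M\,\overline{M^t}=M M^{*}$. Similarly $M_{X^{\sharp}X}=M_{X^{\sharp}}\,\overline{M_X}=M^t\,\overline{M}=\overline{M^{*}M}$, where the last equality uses $M^t\overline M=\overline{(M^*M)}$ (equivalently, $\overline{M^t\overline M}=\overline M^t M=M^*M$). Thus the equation $XX^{\sharp}=X^{\sharp}X$ of antilinear operators holds if and only if the matrices $M_{XX^{\sharp}}$ and $M_{X^{\sharp}X}$ coincide (the matrix representation being a bijection between antilinear operators and $\mathbf M_n$ once the basis is fixed), which is precisely $MM^{*}=\overline{M^{*}M}$, i.e. $M$ is conjugate-normal.

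The main thing to be careful about, rather than a genuine obstacle, is getting the conjugations and transposes in the right places: one must consistently use part (3) of Lemma \ref{cacsprzezone} (composition of two antilinear operators), not part (2), for both $XX^{\sharp}$ and $X^{\sharp}X$, and then simplify $\overline{M^t}=M^{*}$ and $M^t\overline M=\overline{M^{*}M}$. Once those bookkeeping identities are in hand the equivalence is immediate in both directions, since every step is reversible. I would present the computation as a short displayed chain of equalities for each of the two products and then conclude.
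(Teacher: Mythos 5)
Your proposal is correct and follows essentially the same route as the paper: both translate $XX^{\sharp}=X^{\sharp}X$ into $M_{XX^{\sharp}}=M_{X^{\sharp}X}$ and use parts (3) and (4) of Lemma \ref{cacsprzezone} to rewrite these as $MM^{*}$ and $M^{t}\overline{M}=\overline{M^{*}M}$. Your write-up is in fact slightly more careful than the paper's in spelling out the simplifications $\overline{M^{t}}=M^{*}$ and $M^{t}\overline{M}=\overline{M^{*}M}$ and in noting that the basis-fixed matrix representation is a bijection.
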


\begin{proof} The antilinear operator $X$ is antilinearly normal, if \eqref{def3} is fulfilled,
%$$A\,A^{\sharp } = A^{\sharp }A,$$
which is equivalent to
$$
M_{X\,X^{\sharp }} = M_{X^{\sharp }X}.$$
By Lemma \ref{cacsprzezone} we have
$$
M_{X}\overline{M}_{X^{\sharp }} = M_{X^{\sharp }}\overline{M}_{X}
$$
and
$$
M_{X}M^{*}_{X} = \overline{M^{*}}_{X}\overline{M}_{X}
.
$$
\end{proof}
\begin{remark}Let $M\in\mathbf{M}_n$ be a conjugate normal matrix and $M_u$ be an unitary matrix. As it was observed in  \cite[Condition 4.13]{FI2}, the matrix $M_uMM_u^t$ was also conjugate normal. On the other hand, having fixed orthonormal basis, if matrix $M$ is the matrix of some antilinear operator $X\in LA(\mathbb{C}^n)$ i.e. $M=M_X$ and matrix $M_u$ is a matrix of unitary operator $U\in L(\mathbb{C}^n)$ i.e. $M_u=M_U$ then, by Lemma \ref{cacsprzezone}, $M_uMM_u^t=M_UM_XM_U^t=M_{UXU^*}$ and $UXU^*$ is antilineary normal (see Theorem \ref{thm_1.3}) or else Lemma \ref{lem0.2}).
\end{remark}
Recall after \cite{FI1, FI2}
the following theorem characterizing conjugate normal matrices.
\begin{theorem}\label{model1}Let matrix $M\in \mathbf{M}_n$ be conjugate normal. Then there is unitary matrix $M_u\in \mathbf{M}_n$ such that
matrix $M_d=M_uMM_u^t$, where $M_d$ is block diagonal matrix with block diagonal matrices ${(M_d)}^\prime_i$ of size $1\times 1$ and
 ${(M_d)}^{\prime\prime}_j$ of size $2\times 2$ of a form \[ {(M_d)}^\prime_i=[r_i],\quad r_i\geqslant 0\quad\text{ and}\quad
{(M_d)}^{\prime\prime}_j=
\begin{bmatrix}
\phantom{-}s_j&t_j\\
-t_j&s_j
\end{bmatrix},\quad s_j\geqslant 0, t_j\in\mathbb{R} .\]
\end{theorem}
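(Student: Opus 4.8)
The statement to prove is the canonical-form theorem for conjugate normal matrices (Theorem \ref{model1}), which is quoted from \cite{FI1, FI2}. Since the paper cites this as a known result, the natural approach is to reduce it to something we already have in hand, namely Theorem \ref{thm_1.3} together with the spectral theorem for antilinearly normal operators (referenced via \cite{Uh}) and Lemma \ref{cacsprzezone}. The plan is to pass from the matrix $M$ to the antilinear operator $X$ on $\mathbb{C}^n$ whose matrix representation in the fixed orthonormal basis is $M_X = M$. By Theorem \ref{thm_1.3}, conjugate normality of $M$ is exactly antilinear normality of $X$, so the problem becomes: find an orthonormal basis of $\mathbb{C}^n$ in which an antilinearly normal operator has a block-diagonal matrix with $1\times 1$ blocks $[r_i]$, $r_i\ge 0$, and $2\times 2$ blocks of the stated rotation-like form.

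First I would analyze the selfadjoint positive operator $X^{\sharp}X$ (equivalently $XX^{\sharp}$, which coincides with it by antilinear normality). It is linear, positive, selfadjoint, hence by the ordinary spectral theorem $\mathbb{C}^n$ decomposes into its eigenspaces $\h_\lambda$. A short computation using $X X^{\sharp} = X^{\sharp} X$ shows that each $\h_\lambda$ is invariant under $X$ and under $X^{\sharp}$, so it suffices to treat each eigenspace separately; there $X^{\sharp}X = \lambda I$, i.e. $X/\sqrt\lambda$ (for $\lambda>0$) is an antilinear isometry — essentially a conjugation up to the scalar — and the $\lambda = 0$ part is the kernel, contributing zero blocks $[0]$. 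Next, on a fixed eigenspace with $X^{\sharp}X=\lambda I$, $\lambda>0$, I would study $X$ directly: writing $X = \sqrt\lambda\, D$ with $D$ an antilinear isometric operator, the square $D^2$ is a linear unitary operator that commutes with $D$ (again using antilinear normality, $D D^{\sharp}=D^{\sharp}D=I$, so $D^{\sharp}=D^{-1}$ and $D^2$ is unitary). Diagonalizing the linear unitary $D^2$ and pairing up complex-conjugate eigenvalues $e^{\pm i\varphi}$ (together with the real ones $\pm 1$, which are handled by $1\times 1$ blocks up to sign — here one must be a little careful to absorb signs into $r_i\ge 0$, possibly by adjusting the basis vector by a unimodular scalar so that the diagonal entry becomes nonnegative), one obtains an orthonormal basis in which $D$, hence $X$, has the claimed $1\times 1$ and $2\times 2$ block structure, with the $2\times 2$ block being $\bigl[\begin{smallmatrix} s_j & t_j\\ -t_j & s_j\end{smallmatrix}\bigr]$ after writing $\sqrt\lambda\,e^{i\varphi/2}$'s real and imaginary parts as $s_j$ and $t_j$. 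Finally I would translate the change of orthonormal basis back into matrix language: by Lemma \ref{cacsprzezone}(1)--(2), conjugating the antilinear operator $X$ by a unitary $U$ turns $M_X$ into $M_U M_X M_U^{t}$, so the unitary $M_u = M_U$ realizing the basis change yields $M_d = M_u M M_u^{t}$ of the required form.

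The main obstacle I expect is the bookkeeping on the $2\times 2$ blocks and the sign/phase normalization: the antilinear setting means that a change of basis vector $e \mapsto \zeta e$ with $|\zeta|=1$ rescales the corresponding matrix entry by $\zeta^2/|\zeta|^2$-type factors (because of the complex conjugation built into the antilinear action), so one has exactly one "phase degree of freedom" per basis vector, which must be spent to force $r_i\ge 0$ on the $1\times 1$ blocks and $s_j\ge 0$, $t_j\in\mathbb{R}$ on the $2\times 2$ blocks — and one should check that this is always possible and does not conflict across a $2\times 2$ pair. The linear-algebraic core (simultaneous treatment of the commuting operators $X^{\sharp}X$ and the unitary $D^2$, reduction to invariant subspaces) is routine once organized correctly. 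Since the result is already available in \cite{FI1, FI2}, an acceptable alternative is simply to cite it; but the reduction above, via Theorem \ref{thm_1.3} and the spectral theorem for antilinearly normal operators, gives a self-contained argument within the language of this paper.
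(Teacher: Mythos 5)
The paper does not actually prove this statement: it is recalled verbatim from Fassbender--Ikramov \cite{FI1,FI2} (their canonical form for conjugate-normal matrices, a relative of the Youla form), so the ``paper's proof'' is the citation, and your fallback option of simply citing \cite{FI1,FI2} is exactly what the authors do. Your self-contained alternative via antilinear operators is a genuinely different and workable route, and it fits the language of Section~3 well: pass to the antilinear $X$ with $M_X=M$ via Theorem~\ref{thm_1.3}; split $\mathbb{C}^n$ into the eigenspaces of the positive selfadjoint operator $X^{\sharp}X=XX^{\sharp}$ (these are $X$-invariant precisely because the eigenvalues are real and nonnegative, so the antilinearity does no harm); on each nonzero eigenspace write $X=\sqrt{\lambda}\,D$ with $D$ an antilinear surjective isometry and diagonalize the linear unitary $D^2$; finally convert the orthonormal basis change into the congruence $M_uMM_u^t$ via Lemma~\ref{cacsprzezone}, exactly as the paper's own Remark following Theorem~\ref{thm_1.3} confirms. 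What this buys is a proof entirely inside the paper's framework; what the citation buys is brevity.

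One detail in your sketch is wrong and must be repaired, though it does not sink the argument: the eigenvalue $-1$ of $D^2$ cannot be ``handled by $1\times1$ blocks up to sign.'' If $Du=au$ for a scalar $a$, then $D^2u=\bar a a u=|a|^2u$, so antilinear eigenvectors of $D$ live only in the $+1$ eigenspace of $D^2$; rescaling a basis vector by a unimodular $\zeta$ changes a diagonal entry $a$ into $\bar\zeta^{\,2}a$, which can make $a$ nonnegative but can never produce a $1\times1$ block on the $-1$ eigenspace. Instead, on that eigenspace $\langle Dv,v\rangle=-\langle Dv,v\rangle=0$, so $v$ and $Dv$ are orthonormal and span a $D$-invariant plane whose block is $\bigl[\begin{smallmatrix}0&-t\\ t&\phantom{-}0\end{smallmatrix}\bigr]$, i.e.\ the admissible $2\times2$ form with $s_j=0$; it is just the $\varphi=\pi$ case of your general construction. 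For the genuinely nonreal pairs $e^{\pm i\varphi}$, taking a unit $p$ with $D^2p=e^{i\varphi}p$ and setting $u_1=\tfrac1{\sqrt2}\,(p+e^{i\varphi/2}Dp)$, $u_2=\tfrac{-i}{\sqrt2}\,(p-e^{i\varphi/2}Dp)$ yields the block $\bigl[\begin{smallmatrix}\phantom{-}s&t\\-t&s\end{smallmatrix}\bigr]$ with $s+it=e^{i\varphi/2}$ and $s=\cos(\varphi/2)\geqslant 0$ for $\varphi\in(-\pi,\pi)$. With these two repairs your outline closes.
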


The consequence of the above is the following characterization of $C$--normal operators
\begin{theorem}\label{model2} Let $C$ be a conjugation in $\mathbb{C}^n$. Let  $N\in L(\mathbb{C}^n)$ be a $C$--normal operator. Then, there is unitary operator $U\in L(\mathbb{C}^n) $
such that
\begin{enumerate}
\item
 $N=U^*\, (DC)\, (CUC)$, noticing that $U^*, DC, CUC\in L(\mathbb{C}^n)$ \qquad \qquad or
  \item
  $N=(UC)^\sharp\, (DC)\, CU$, noticing that $(UC)^\sharp, CU\in LA(\mathbb{C}^n)$ and $DC\in L(\mathbb{C}^n)$,
  \end{enumerate}
  where $D$ is block diagonal operator given by block diagonal matrices $(M_d)^\prime_i$ of size $1\times 1$ and
 $(M_d)^{\prime\prime}_j$ of size $2\times 2$ of a form \[ (M_d)^\prime_i=[r_i],\quad r_i\geqslant 0\quad\text{ and}\quad
(M_d)^{\prime\prime}_j=
\begin{bmatrix}
\phantom{-}s_j&t_j\\
-t_j&s_j
\end{bmatrix},\quad s_j\geqslant 0, t_j\in\mathbb{R} .\]
\end{theorem}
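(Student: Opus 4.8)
The plan is to translate the matrix statement of Theorem \ref{model1} into an operator statement by means of the dictionary in Lemma \ref{cacsprzezone}. First I would reduce $C$-normality to antilinear normality: by Definition \ref{def1} the operator $N$ is $C$-normal precisely when the antilinear operator $NC$ is antilinearly normal, and by Theorem \ref{twCnor}(7) the same holds for $CN$. I would then fix an orthonormal basis $\{e_k\}$ of $\mathbb{C}^n$ with $Ce_k=e_k$ (such a basis exists for any conjugation, cf. \cite[Lemma 2.1]{GPP}), so that $M_C=I$. Writing $M_X$ for the matrix representation of an antilinear $X$ and $M_T$ for the standard matrix of a linear $T$, Lemma \ref{cacsprzezone} then gives $M_{NC}=M_N$ and $M_{CN}=\overline{M_N}$.

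Since $NC$ is antilinearly normal, Theorem \ref{thm_1.3} says $M_{NC}$ is conjugate-normal, so Theorem \ref{model1} supplies a unitary matrix $M_u$ for which $M_d:=M_uM_{NC}M_u^t$ is the block diagonal matrix described in the statement. Let $U\in L(\mathbb{C}^n)$ be the unitary operator with $M_U=M_u$, and let $D\in LA(\mathbb{C}^n)$ be the antilinear operator with matrix representation $M_D=M_d$; then $DC$ is a genuinely linear operator whose standard matrix is $M_d$ (by Lemma \ref{cacsprzezone} and $M_C=I$), i.e. $D$ is exactly the block diagonal operator of the statement. The key step is the correspondence already recorded in the remark following Theorem \ref{thm_1.3}: for antilinear $X$ and unitary $U$ one has $M_uM_XM_u^t=M_{UXU^*}$. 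Applying this with $X=NC$ gives $M_{U(NC)U^*}=M_d=M_D$, hence $U(NC)U^*=D$, and therefore $NC=U^*DU$. Multiplying on the right by $C$ and using $C^2=I$,
$$
N=(NC)C=U^*DUC=U^*(DC)(CUC),
$$
which is item (1) (and indeed $U^*,DC,CUC\in L(\mathbb{C}^n)$).

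For item (2) I would argue in parallel, now starting from the antilinear normality of $CN$. Since $M_{CN}=\overline{M_N}$ is again conjugate-normal and the block diagonal normal form of Theorem \ref{model1} has real entries, the same translation produces a unitary $U$ and the block diagonal $D$ with $CN=U^*DU$, whence $N=C(U^*DU)$. It remains to identify $CU^*$ with $(UC)^\sharp$: by Lemma \ref{cacsprzezone}(4), $M_{(UC)^\sharp}=(M_{UC})^t=M_u^t=M_{CU^*}$, so $(UC)^\sharp=CU^*$. Combined with $C^2=I$ this rewrites $N=CU^*DU$ as $N=(UC)^\sharp(DC)(CU)$, which is item (2) (with $(UC)^\sharp,CU\in LA(\mathbb{C}^n)$ and $DC\in L(\mathbb{C}^n)$).

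I do not expect a genuine obstacle: the content is carried entirely by Theorem \ref{model1} and the correspondence $M_uMM_u^t\leftrightarrow UXU^*$. The one point requiring care is the bookkeeping of linearity versus antilinearity, together with the conjugations and transposes introduced by rules (1)--(4) of Lemma \ref{cacsprzezone}; in particular, one must read the block diagonal normal form as the matrix representation of an \emph{antilinear} operator $D$, so that the operator actually appearing in the formulas is the linear operator $DC$ (respectively the antilinear operators $(UC)^\sharp$ and $CU$ in the second form), and one must track at each step whether a composition is linear or antilinear before invoking Lemma \ref{cacsprzezone}.
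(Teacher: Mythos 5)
Your proposal is correct and follows essentially the same route as the paper: reduce $C$-normality of $N$ to antilinear normality of $NC$ (resp.\ $CN$), pass to the conjugate-normal matrix $M_{NC}$ via Theorem \ref{thm_1.3}, invoke the normal form of Theorem \ref{model1}, and translate $M_uM_{NC}M_u^t=M_{UNCU^*}$ back to $D=UNCU^*$ using Lemma \ref{cacsprzezone}. Your extra choice of a $C$-fixed basis (so $M_C=I$) and the explicit identification $(UC)^\sharp=CU^*$ in part (2) are harmless refinements of details the paper leaves implicit.
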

\begin{proof}
Operator $N$ is $C$--normal thus $NC$ is antilinearly normal. Let us fix some orthonormal  basis in $\mathbb{C}^n$, for example canonical one. Hence, by theorem \ref{thm_1.3}, the matrix $M_{NC}$ of $NC$ is conjugate normal. Now by Theorem \ref{model1} there is a unitary matrix $M_u$ and specific block diagonal matrix $M_d$ described in Theorem \ref{model1} such that $M_d=M_uM_{NC}M_u^t$. Let $D\in LA(\mathbb{C}^n)$ be an antilinear operator represented by matrix $M_d$ and $U\in L(\mathbb{C}^n)$ be the unitary operator represented by the matrix $M_u$. Then, $M_D=M_UM_{NC}M_U^t=M_{UNCU^*}$ by Lemma \ref{cacsprzezone}. Hence $D=UNCU^*$ and we get (1). Condition (2) can be proved similarly starting with $CN$.
\end{proof}
\section{Case of canonical conjugation in $\mathbb{C}^n$}

Let ${C_{z^n}}$ be a {\it canonical} conjugation in $\mathbb{C}^n$ given by $C_{z^n}(z_1,\dots,z_n)=(\bar{z}_n,\bar{z}_{n-1},\dots,\bar{z}_1)$.
 Recall the model spaces defined in Example \ref{ex2.1}. If we consider the inner function $\theta(z)=z^n$ then $\mathbb{C}^n$ can be seen as a model space $\mathbb{C}^n=H^2\ominus z^n H^2$. Moreover, the conjugation $C_{z^n}$ is exactly the conjugation $C_\theta$ with $\theta=z^n$ considered in Example \ref{ex2.1}.
%  that when we consider Hardy space on the unit disc $H^2$ and the multiplication operator by the independent variable $S$ (i.e. unilateral shift) then the co--invariant subspaces for $S$ are exactly the model spaces $K^2_\theta=H^2\ominus \theta H^2$, where $\theta$ is an inner function. The canonical conjugation in $K^2_\theta$ is given by $C_\theta f=\theta \bar z\bar f$ for $f\in K^2_\theta$. It turns out that if $\theta(z)=z^n$ then $K^2_{z^n}= \mathbb{C}^n$ and appropriate conjugation is $C_{z^n}$.

 \begin{lemma}\label{2.1}Let $T\in L(\mathbb{C}^n)$ and $M_T=[a_{ij}]_{\substack{i=1,\dots,n\\j=1,\dots,n}} $. Then $ M_{C_{z^n}\, T\, C_{z^n}}=[\bar a_{n-i+1\;\; n-j+1}]_{\substack{i=1,\dots,n\\j=1,\dots,n}}$. That means  \[C_{z^n}\bmatrix a_{11} & \cdots &a_{1n}\\ \vdots&\ddots&\vdots\\a_{n1}& \cdots&
a_{nn}\endbmatrix C_{z^n}= \bmatrix a_{nn} & \cdots &a_{n1}\\ \vdots&\ddots&\vdots\\a_{1n}& \cdots& a_{11}\endbmatrix\]
\end{lemma}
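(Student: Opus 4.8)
The plan is to unwind the definition of the conjugation $C_{z^n}$ in terms of the standard orthonormal basis and then compute the matrix of $C_{z^n}\,T\,C_{z^n}$ entry by entry. Fix the canonical orthonormal basis $e_1,\dots,e_n$ of $\mathbb{C}^n$. The first step is to observe that the formula $C_{z^n}(z_1,\dots,z_n)=(\bar z_n,\dots,\bar z_1)$ says precisely that $C_{z^n}e_k=e_{n-k+1}$ for every $k$, and that $C_{z^n}$ acts antilinearly, so $C_{z^n}\big(\sum_k z_k e_k\big)=\sum_k \bar z_k\, e_{n-k+1}$. In particular $C_{z^n}$ is its own inverse, which we already know since it is a conjugation.

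Next I would use the entry formula $a_{ij}=\langle T e_j, e_i\rangle$ together with the defining property of a conjugation, $\langle h,g\rangle=\langle Cg,Ch\rangle$, to compute the $(i,j)$ entry of $M_{C_{z^n}TC_{z^n}}$. Writing $b_{ij}=\langle C_{z^n}T C_{z^n} e_j, e_i\rangle$, antilinearity of the two outer factors and the conjugation identity give
\[
b_{ij}=\langle C_{z^n}T C_{z^n} e_j, e_i\rangle=\langle C_{z^n} e_i,\; T C_{z^n} e_j\rangle=\overline{\langle T C_{z^n} e_j,\; C_{z^n} e_i\rangle}.
\]
Now substitute $C_{z^n}e_j=e_{n-j+1}$ and $C_{z^n}e_i=e_{n-i+1}$ to obtain $b_{ij}=\overline{\langle T e_{n-j+1}, e_{n-i+1}\rangle}=\overline{a_{\,n-i+1\;\;n-j+1}}$, which is exactly the claimed formula $M_{C_{z^n}TC_{z^n}}=[\bar a_{\,n-i+1\;\;n-j+1}]$. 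The displayed matrix identity in the statement is just the explicit rewriting of this: the $(i,j)$ entry of the right-hand matrix is the complex conjugate of the entry of $M_T$ in row $n-i+1$ and column $n-j+1$, i.e. one reflects $M_T$ through its anti-diagonal and conjugates all entries.

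There is no real obstacle here; the only thing to be careful about is bookkeeping with the two antilinear factors, i.e. making sure the conjugation bar is applied exactly once (the two antilinear operators each contribute a conjugation of scalars, but the pairing $\langle C_{z^n}e_i, TC_{z^n}e_j\rangle$ already absorbs one of them via the identity $\langle h,g\rangle=\langle Cg,Ch\rangle$, leaving a single outer bar). One may alternatively derive the result purely formally from Lemma \ref{cacsprzezone} by writing $C_{z^n}=JK$ is unnecessary; the direct computation above is the cleanest route. A sanity check: taking $T=I$ gives $b_{ij}=\overline{\delta_{\,n-i+1,\,n-j+1}}=\delta_{ij}$, consistent with $C_{z^n}^2=I$.
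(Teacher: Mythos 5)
Your computation is correct and is exactly the direct entry-by-entry verification this lemma calls for (the paper itself states it without proof): $C_{z^n}e_k=e_{n-k+1}$, one application of $\langle h,g\rangle=\langle Cg,Ch\rangle$ together with $C_{z^n}^2=I$, and the convention $a_{ij}=\langle Te_j,e_i\rangle$ yield $b_{ij}=\overline{a_{\,n-i+1\;\;n-j+1}}$. Note only that the displayed matrix in the statement omits the conjugation bars that appear in the preceding formula $[\bar a_{\,n-i+1\;\;n-j+1}]$; your derivation confirms the barred version is the correct one.
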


By the second diagonal of the matrix $M=[a_{ij}]\in M_{nn}$ we will mean the set of elements $a_{ij}$ such that $i+j=n+1$.
\begin{theorem}
%Let $C$ be a canonical conjugation in $\mathbb{C}^n$ given be $C(z_1,\dots,z_n)=(\bar z_n,\bar z_{n-1},\dots, \bar z_1)$.
Let $N\in L(\mathbb{C}^n)$ be $C_{z^n}$--normal operator. Then, there is a unitary operator $U \in L(\C^n)$ and the operator $\tilde{D}\in L(\C^n)$ having a matrix representation concentrated on the second diagonal given by block diagonal matrices $(M_d^\prime)_{i}$ of the size
$1 \times 1$ and $(M_d^{\prime \prime})_j$ of the size $2 \times 2$ of the form $(M_d^\prime)_{\iota}=[r_j]$, $r_i\geqslant 0$ and $(M_d^{\prime\prime})_j=\bmatrix t_j & \phantom{-} s_j \\ s_j & -t_j\endbmatrix$, $s_j\geqslant 0$, $t_j\in \mathbb{R}$ such that
\begin{enumerate}
  \item $N=U\, \tilde D\, (C_{z^n}U^*C_{z^n}$), which  can  be written using matrix representation as,
  \item $M_N=M_U\,M_{\tilde D}\,(M_U^{s})^t$.
\end{enumerate}
\end{theorem}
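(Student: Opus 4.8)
The plan is to read the statement off from Theorem~\ref{model2} specialised to the canonical conjugation $C=C_{z^n}$; almost all of the work is reorganising that conclusion into a ``second diagonal'' normal form. Since $N$ is $C_{z^n}$--normal, Theorem~\ref{model2}(1) gives a unitary operator $W\in L(\mathbb{C}^n)$ and the antilinear block--diagonal operator $D$, whose matrix $M_D$ in the canonical basis is the block diagonal $M_d$ with $1\times1$ blocks $[r_i]$ ($r_i\geqslant0$) and $2\times2$ blocks $\begin{bmatrix}\phantom{-}s_j&t_j\\-t_j&s_j\end{bmatrix}$ ($s_j\geqslant0$, $t_j\in\mathbb{R}$), such that $N=W^*(DC_{z^n})(C_{z^n}WC_{z^n})$. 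I would then set $U:=W^*$ (again unitary) and $\tilde D:=DC_{z^n}$. Since $D$ and $C_{z^n}$ are both antilinear, $\tilde D$ is \emph{linear}, and $C_{z^n}U^*C_{z^n}=C_{z^n}WC_{z^n}$; hence the identity above becomes $N=U\,\tilde D\,(C_{z^n}U^*C_{z^n})$, which is~(1).

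Next I would determine the matrix of $\tilde D$. Write $J:=M_{C_{z^n}}$ for the matrix representation of the antilinear operator $C_{z^n}$; from $C_{z^n}e_k=e_{n-k+1}$ one reads off that $J$ is the ``flip'' matrix ($J_{ij}=1$ iff $i+j=n+1$), so that $\overline J=J^t=J$, $J^2=I$, and right multiplication by $J$ reverses the order of the columns of a matrix. By Lemma~\ref{cacsprzezone}(3), $M_{\tilde D}=M_{DC_{z^n}}=M_D\,\overline{M_{C_{z^n}}}=M_d\,J$. The routine step is then to check that reversing the columns of the block diagonal $M_d$ carries each $1\times1$ block $[r_i]$ onto the second diagonal and each $2\times2$ block $\begin{bmatrix}\phantom{-}s_j&t_j\\-t_j&s_j\end{bmatrix}$ to the block $(M_d^{\prime\prime})_j=\begin{bmatrix}t_j&\phantom{-}s_j\\s_j&-t_j\end{bmatrix}$ straddling the second diagonal; this is exactly the asserted normal form for $\tilde D$.

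Finally, for (2), I would pass to matrix representations in the fixed basis. As $N,U,\tilde D$ and $C_{z^n}U^*C_{z^n}$ are all linear, $M_N=M_U\,M_{\tilde D}\,M_{C_{z^n}U^*C_{z^n}}$, and it only remains to compute $M_{C_{z^n}U^*C_{z^n}}$. Applying Lemma~\ref{2.1} to $T=U^*$ and using $M_{U^*}=M_U^*$ gives, for all $i,j$, $\bigl(M_{C_{z^n}U^*C_{z^n}}\bigr)_{ij}=\overline{(M_{U^*})_{\,n-i+1,\,n-j+1}}=(M_U)_{\,n-j+1,\,n-i+1}=\bigl((M_U^s)^t\bigr)_{ij}$, i.e.\ $M_{C_{z^n}U^*C_{z^n}}=(M_U^s)^t$, whence $M_N=M_U\,M_{\tilde D}\,(M_U^s)^t$, which is~(2).

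I do not expect a genuine obstacle: the statement is in essence a transcription of Theorem~\ref{model2} for $C_{z^n}$. The one thing to be careful with is the index bookkeeping --- that $C_{z^n}$ acts on matrices as the flip $J$, while $C_{z^n}$--symmetry conjugation (Lemma~\ref{2.1}) combines the flip with a complex conjugation --- and, in particular, verifying that the column reversal of the $2\times2$ blocks produces the sign pattern $\begin{bmatrix}t_j&s_j\\s_j&-t_j\end{bmatrix}$ and not some other arrangement.
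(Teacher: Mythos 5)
Your proposal is correct and follows essentially the same route as the paper: specialise Theorem \ref{model2}(1) to $C=C_{z^n}$, set $\tilde D=DC_{z^n}$, use Lemma \ref{cacsprzezone} to identify $M_{\tilde D}=M_dJ$ as the second-diagonal normal form, and apply Lemma \ref{2.1} to get $M_{C_{z^n}U^*C_{z^n}}=(M_U^s)^t$. You in fact supply the index bookkeeping (the column flip of the $2\times2$ blocks and the sign pattern) that the paper's proof leaves implicit.
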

\begin{proof}
By Theorem \ref{model2} (1) there is  a unitary operator $U\in L(\mathbb{C}^n)$ and decomposition $N=U\,(DC_{z^n})\,(C_{z^n}U^*C_{z^n})$ where $DC_{z^n}\in L(\mathbb{C}^n)$ $(C_{z^n}U^*C_{z^n})\in L(\mathbb{C}^n)$. Define $\tilde D=DC_{z^n}\in L(\C^n)$ and applying Lemma \ref{cacsprzezone} the operator $\tilde D$ has got a suitable representation. Hence we get (1). Applying Lemma \ref{2.1} we obtain (2).
\end{proof}

\begin{example}
For $n=3$, having a canonical conjugation $C_{z^3}(z_1,z_2,z_3)$ $=(\bar z_3,\bar z_2, \bar z_1)$, all $C_{z^3}$--normal operators have the matrix representation $M_UM_{\tilde D}(M_U^{s})^t$, where $M_U$ is any unitary matrix and $M_{\tilde D}=\bmatrix 0 & \phantom{-} 0 & r \\ t& \phantom{-}  s & 0 \\ s & -t & 0 \endbmatrix$, $r \geqslant 0$, $s \geqslant 0$ , $t\in \mathbb{R}$ or $M_{\tilde D}=\bmatrix 0 & 0 & r_1 \\ 0& r_2& 0 \\ r_3 & 0 & 0 \endbmatrix$, $r_1,r_2,r_3 \geqslant 0$.
\end{example}

\section{\textbf{$C$-normal operators on  $L^2$  spaces}}
% W L^2  wszystkie operatory normalne są C-normalne
Now, we would like to find examples of $C$--normal operators in $L^2$ spaces. Direct calculation shows the following:
\begin{proposition}Let $(X,\mu)$ be a measure space. Let  $L^{2}(X,\mu)$ be a space of complex valued functions with conjugation $C$ given by $Cf(x)= \overline{f(x)}$.
Let $\varphi \in L^{\infty}$ and $M_{\varphi}$ be a multiplication operator on $L^{2}(X,\mu)$, $M_\varphi f=\varphi f$. Then   $M_{\varphi} $ is  $C$--symmetric, thus also $C$--normal.
\end{proposition}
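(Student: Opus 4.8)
The plan is to verify the defining relation $CAC = A^*$ directly for $A = M_\varphi$, since by Definition~\ref{def1} and the discussion in Section~2 any $C$--symmetric operator is automatically $C$--normal, so it suffices to establish $C$--symmetry. First I would compute $M_\varphi^*$: for $f,g\in L^2(X,\mu)$ we have $\langle M_\varphi f, g\rangle = \int_X \varphi f\,\bar g\,d\mu = \int_X f\,\overline{\bar\varphi g}\,d\mu = \langle f, M_{\bar\varphi} g\rangle$, hence $M_\varphi^* = M_{\bar\varphi}$. Next I would compute $C M_\varphi C$ acting on $f$: since $Cf = \bar f$, we get $C M_\varphi C f = C(\varphi\,\bar f) = \overline{\varphi\,\bar f} = \bar\varphi\, f = M_{\bar\varphi} f$. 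Comparing the two computations gives $C M_\varphi C = M_{\bar\varphi} = M_\varphi^*$, which is precisely the statement that $M_\varphi$ is $C$--symmetric.

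Having established $C$--symmetry, the $C$--normality is immediate from the remark in Section~2 that $C$--symmetric operators are $C$--normal (which follows by substituting $CAC = A^*$ into \eqref{dfcnor}, using \eqref{astar} to rewrite $(NC)^\sharp = CN^*$, so both sides of \eqref{dfcnor} reduce to the same expression $NN^* = CN^*NC$, equivalently $M_\varphi M_{\bar\varphi} = M_{\bar\varphi} M_\varphi$, which holds since $L^\infty$ is commutative). One should also note in passing that $C$ as defined genuinely is a conjugation on $L^2(X,\mu)$: it is clearly involutive and antilinear, and $\langle Cg, Cf\rangle = \int_X \bar g\,\overline{\bar f}\,d\mu = \int_X f\,\bar g\,d\mu = \langle f, g\rangle$, so it is isometric in the required antilinear-involution sense; this is what licenses speaking of $C$--symmetry at all.

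There is essentially no obstacle here: the proof is a two-line direct computation, which is why the statement is phrased as an immediate consequence of direct calculation. The only points requiring the slightest care are keeping track of which function gets conjugated when composing with $C$ on both sides (the two conjugations on the inner argument and the outer value combine to leave $\varphi$ conjugated exactly once), and noting that boundedness of $\varphi$ ensures $M_\varphi \in L(\h)$ so that the ambient framework of Definition~\ref{def1} applies. I would present the argument as: $M_\varphi^* = M_{\bar\varphi}$, then $C M_\varphi C = M_{\bar\varphi}$, conclude $C$--symmetry, and invoke the Section~2 observation for $C$--normality.
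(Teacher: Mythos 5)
Your proof is correct and is exactly the ``direct calculation'' the paper alludes to (the paper gives no written proof): you compute $M_\varphi^*=M_{\bar\varphi}$ and $CM_\varphi C=M_{\bar\varphi}$, conclude $C$--symmetry, and invoke the standing observation that $C$--symmetric operators are $C$--normal. Nothing is missing.
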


%x\begin{proof}%
%\begin{multline*}
%  (CM_{\varphi}M^{*}_{\varphi})(f(x)) = (CM_{\varphi } M_{\bar{\varphi}} )(f(x)) = (CM_{ |\varphi | ^{2}})(f(x)) = C(|\varphi | ^{2} f(x)) \\=|\varphi | ^{2} (x) \overline{f(x)}
%= (M_{|\varphi | ^{2}}C)(f(x)) =
% (M^{*}_{\varphi}M_{\varphi}C)(f(x)).
%\end{multline*}
%\end{proof}
  Recall that any normal operator $N\in \h$ is unitary equivalent to the multiplication operator $M_\varphi$, i.e. $M_\varphi=UNU^*$ where  $U\in L(H, L^2(X,\mu)$ is unitary. Let $C$ be a conjugation in $H$ such that $(UCU^*)f(x)=\overline {f(x)}$. Then $N$ is $C$--normal. On the other hand, we have the following
% W L^2[0,1]  M_fi jest C-normalny <=> |fi|^2(t) = |fi|^2(1-t)

\begin{example} Consider $L^{2}[0,1]$. A conjugation $C$ on $L^{2}[0,1]$ is given by $(Cf)(t) = \overline{f(1-t)}$, $t \in [0,1]$.
Let $\varphi \in L^{\infty}$ and consider $M_{\varphi} \in L( L^{2}[0,1])$, $M_\varphi f=\varphi f$. It turns out, that operator $M_{\varphi }$ is C-normal  if and only if $| \varphi |^{2}(t) = | \varphi |^{2} (1-t)$.
%Let us see the condition of C-normality
%\begin{align*}
%(CM_{\varphi}M^{*}_{\varphi}f)(t)=(CM_{ |\varphi | ^{2} } f)(t) = \overline{M_{ |\varphi | ^{2} }f(1-t)} = | \varphi |^{2}(1-t) \overline{  f(1-t)}.
%\end{align*}
%On the other side
%\begin{align*}
%(M^{*}_{\varphi}M_{\varphi}Cf)(t)=   \big{(} M_{\arrowvert \varphi \arrowvert ^{2} }C \big{)}f  \big{(} t \big{)} = \arrowvert \varphi \arrowvert ^{2}(t)(Cf)(t) = \arrowvert \varphi \arrowvert ^{2}(t) \overline{f(1-t)}.
%\end{align*}
\end{example}

%Projekcja ...

% M - C-normalny \Leftrightarrow |fi|^2 - parzysta

\begin{proposition}
Let $ M_{\varphi } \in L^{2}(\mathbb{R}, \tfrac 1{\sqrt{2\pi}}\exp(-\tfrac{x^2}2)\,dx)$ and $\varphi \in L^{\infty }$.  Let conjugation $C$ be given by $Cf(x)=\overline{f(-x)}$. It turns out, that the operator $ M_{\varphi }$ is $C$--normal if and only if $ |\varphi |^{2}$ is an even mapping.
\end{proposition}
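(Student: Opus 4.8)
The plan is to reduce $C$-normality of $M_\varphi$ to a pointwise identity for $|\varphi|^2$ by invoking one of the equivalent conditions from Theorem \ref{twCnor}, most conveniently condition (8), namely $\|NCh\| = \|N^*h\|$ for all $h\in\h$.

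First I would check that $C$, $(Cf)(x)=\overline{f(-x)}$, is indeed a conjugation on the weighted space $\h=L^2\big(\mathbb{R},w\,dx\big)$ with $w(x)=\tfrac{1}{\sqrt{2\pi}}\exp(-\tfrac{x^2}{2})$. It is plainly antilinear and involutive, and since the Gaussian weight $w$ is even, the substitution $x\mapsto -x$ gives $\|Cf\|^2=\int_{\mathbb{R}}|f(-x)|^2w(x)\,dx=\int_{\mathbb{R}}|f(x)|^2w(x)\,dx=\|f\|^2$, so $C$ is isometric, hence a conjugation. Also $M_\varphi\in L(\h)$ because $\varphi\in L^\infty$, and a direct pairing computation shows $M_\varphi^*=M_{\bar\varphi}$.

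Next, for an arbitrary $h\in\h$ I would compute both sides of condition (8). Using $|(Ch)(x)|^2=|h(-x)|^2$ together with the evenness of $w$,
\[
\|M_\varphi Ch\|^2=\int_{\mathbb{R}}|\varphi(x)|^2|h(-x)|^2w(x)\,dx=\int_{\mathbb{R}}|\varphi(-x)|^2|h(x)|^2w(x)\,dx ,
\]
while
\[
\|M_\varphi^* h\|^2=\|M_{\bar\varphi}h\|^2=\int_{\mathbb{R}}|\varphi(x)|^2|h(x)|^2w(x)\,dx .
\]
Hence, by Theorem \ref{twCnor}, $M_\varphi$ is $C$-normal if and only if $\int_{\mathbb{R}}\big(|\varphi(-x)|^2-|\varphi(x)|^2\big)|h(x)|^2 w(x)\,dx=0$ for every $h\in\h$.

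Finally I would extract the pointwise statement. If $|\varphi|^2$ is even the integrand vanishes identically, so $M_\varphi$ is $C$-normal. Conversely, testing the displayed identity against $h=\mathbf{1}_E$ for an arbitrary measurable set $E$ of finite measure (which lies in $\h$) forces $\int_E\big(|\varphi(-x)|^2-|\varphi(x)|^2\big)w(x)\,dx=0$ for all such $E$; since $w>0$ a.e. and $|\varphi(-x)|^2-|\varphi(x)|^2\in L^\infty$, this yields $|\varphi(-x)|^2=|\varphi(x)|^2$ for a.e.\ $x$, i.e.\ $|\varphi|^2$ is even. The only mildly technical points are the verification that $C$ is a conjugation --- where the evenness of the Gaussian weight is exactly what is needed --- and the routine ``test against characteristic functions'' step that converts the integral identity into a pointwise one; I do not expect a genuine obstacle.
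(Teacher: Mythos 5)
Your argument is correct, and since the paper states this proposition as a ``direct calculation'' without writing out a proof, your reduction via Theorem \ref{twCnor}(8) together with the evenness of the Gaussian weight and the test-against-characteristic-functions step is exactly the intended kind of verification. (A marginally shorter route is condition (5): $CM_\varphi M_\varphi^* = M_\varphi^* M_\varphi C$ reads $|\varphi(-x)|^2\,\overline{f(-x)} = |\varphi(x)|^2\,\overline{f(-x)}$ for all $f$, which gives the pointwise identity immediately; but this is the same computation in different clothing.)
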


% Przykład nie jest przysta ale |fi| parzysta	
%
%\begin{example}
%Let $M_{\varphi } \in L^{2}[-1,1]$ and $\varphi \in L^{\infty }$. The function $\varphi = Re(z) $ is not even but $|\varphi |$ is an even function.
%\end{example}

%
%\begin{lemma}[Do czegos potrzebny]
%Niech $f = g_{1} + g_{2} \in \h^{2\perp}.$ Sprawdzimy, czy $CP_{\h^{2}} = P_{\h^{2}}C$. Obliczajac z lewej strony otrzymujemy
%$$( CP_{\h^{2}} ) (g_{1} + g_{2}) (z) = C(g_{1})(z) = \overline{g_{1}(\overline{z})}.$$ Prawa strona wynosi
%\begin{align*}
%P_{\h^{2}}C ) (g_{1} + g_{2}) (z) &= P_{\h^{2}}(\overline{g_{1} + g_{2}})(\overline{z}) =  P_{\h^{2}}(\overline{g_{1}} + \overline{g_{2}})(\overline{z}) = P_{\h^{2}}(Cg_{1} + Cg_{2})= \\& = P_{\h^{2}}(Cg_{1}) = Cg_{1} = \overline{g_{1}(\overline{z})}.
%\end{align*}
%\end{lemma}

\section{\textbf{$C$--normal Toeplitz operators on  Hardy spaces}}

In the following section, we would like to characterize $C$--symmetric, $C$--skew--symmetric,  $C$--normal operators  in the Hardy space  $H^2$. %. Especially we will focus on Toeplitz operators.
%Let  $m$ be a normalized Lebesgue measure on unit circle $\mathbb{T}$ and let us consider space

 Recall that $L^2=L^2(\mathbb{T}, m)$ and the  Hardy space $H^2$ is its subspace of those elements of $L^2$ which have negative Fourier coefficient equal to $0$.
Now, we will consider Toeplitz operators. Let $\varphi\in L^\infty=L^\infty(\mathbb{T},m)$ and define the {\it Toeplitz operator with symbol} $\varphi$ as$$ T_\varphi f= P_{H^2}(\varphi f).$$

Note also after \cite[Theorem 9]{BH} that conditions for a Toeplitz operator to  be selfadjoint (i.e. a symbol have to be real) or to be normal (i.e. a symbol have to be linear function of a real function) are very restrictive. In the following section, we will show that, the classes of  $C$--symmetric, $C$--skew--symmetric,  $C$--normal operators  Toeplitz operators  are much more wider. In fact, we fully  characterize these classes of operators with respect to some natural conjugations.

 First natural conjugation (see \cite[p.103]{NF}) which can be studied is given by
\begin{equation}\label{cnat}
(C_0f)(z)=\overline{f(\bar z)}\quad \text{for} \quad f\in H^2.
\end{equation}
In \cite{LK}, for a given real $\xi, \theta$, there was also considered more general  conjugation given by
\begin{equation}\label{cext}
 (C_{\xi , \theta }f)(z) = e^{i\xi }\cdot \overline{f(e^{i\theta }\bar{z})}.
\end{equation} The Hardy space has  the natural basis $e_k(z)=z^k$, $k=0,1,\dots$. Note that $C_{\xi ,\theta }e_{k} = e^{ i\xi }\cdot e^{-ik\theta } e_{k}$, $k\in \mathbb{Z}_+$.
\begin{lemma}\label{lemma2.1}
Let $C_{\xi , \theta }$, $ \xi , \theta \in \mathbb{R}$, be a conjugation on $H^{2}$ given by $ (C_{\xi , \theta }f)(z) = e^{i\xi }\cdot \overline{f(e^{i\theta }\bar{z})}$. Let an operator $T\in L(H^{2})$ be given by a matrix $[a_{lk}]_{k,l \geqslant 0}$ as to the basis $\{e_{k}\}_{k\in \mathbb{Z}_+} $,i.e $a_{lk}=\langle Te_k,e_l\rangle$. Then
\begin{enumerate}
\item the operator $C_{\xi ,\theta } TC_{\xi ,\theta }$ has a matrix $[b_{lk}]_{k,l\geqslant 0}$, $b_{lk}=e^{i (k-l ) \theta }\, \bar{a}_{lk}$,
\item the operator $T$ is $C_{\xi ,\theta }$--symmetric if and only if $a_{lk} = e^{i(k-l)\theta } a_{kl}$, $k,l\geqslant 0$; in particular $a_{ll}$ are arbitrary;
\item  the operator $T$ is $C_{\xi ,\theta }$--skew--symmetric if and only if  $a_{lk} = -e^{i(k-l)\theta } a_{kl}$, $k, l\in \mathbb{Z}_+$; in particular $a_{ll}=0$.
\end{enumerate}
\end{lemma}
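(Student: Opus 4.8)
The plan is to compute the matrix of the conjugated operator $C_{\xi,\theta}TC_{\xi,\theta}$ directly from the action of $C_{\xi,\theta}$ on the basis, and then read off the symmetry and skew-symmetry characterizations by comparing with the matrix of $T^*$. First I would record that $C_{\xi,\theta}e_k = e^{i\xi}e^{-ik\theta}e_k$, which was already noted in the excerpt; since $C_{\xi,\theta}$ is antilinear this means $C_{\xi,\theta}(\sum_k c_k e_k) = \sum_k \overline{c_k}\, e^{i\xi}e^{-ik\theta} e_k$. To get part (1) I would apply $C_{\xi,\theta}TC_{\xi,\theta}$ to the basis vector $e_k$: first $C_{\xi,\theta}e_k = e^{i\xi}e^{-ik\theta}e_k$, then $T$ acts linearly giving $e^{i\xi}e^{-ik\theta}\sum_l a_{lk}e_l$, then the outer $C_{\xi,\theta}$ acts antilinearly, conjugating the scalar coefficients and multiplying $e_l$ by $e^{i\xi}e^{-il\theta}$. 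Collecting terms, the coefficient of $e_l$ in $C_{\xi,\theta}TC_{\xi,\theta}e_k$ is $e^{-i\xi}e^{ik\theta}\cdot \overline{a_{lk}}\cdot e^{i\xi}e^{-il\theta} = e^{i(k-l)\theta}\overline{a_{lk}}$, which is exactly $b_{lk}$ as claimed. The two phases $e^{i\xi}$ cancel, which is why $\xi$ plays no role in the matrix conditions.

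Next, for parts (2) and (3) I would use the standard fact that the matrix of $T^*$ as to an orthonormal basis is $[\,\overline{a_{kl}}\,]_{k,l\geqslant 0}$, i.e. the $(l,k)$ entry of $M_{T^*}$ is $\overline{a_{kl}}$. The operator $T$ is $C_{\xi,\theta}$-symmetric iff $C_{\xi,\theta}TC_{\xi,\theta} = T^*$, which by part (1) is equivalent to $e^{i(k-l)\theta}\overline{a_{lk}} = \overline{a_{kl}}$ for all $k,l\geqslant 0$. Taking complex conjugates of both sides (and using $\overline{e^{i(k-l)\theta}} = e^{-i(k-l)\theta} = e^{i(l-k)\theta}$, then relabelling, or equivalently conjugating directly) yields $a_{lk} = e^{i(k-l)\theta}a_{kl}$, which is the stated condition; setting $k=l$ gives $a_{ll}=a_{ll}$, so the diagonal entries are unconstrained. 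For part (3), $C_{\xi,\theta}$-skew-symmetry means $C_{\xi,\theta}TC_{\xi,\theta} = -T^*$, so the same computation with a sign change gives $a_{lk} = -e^{i(k-l)\theta}a_{kl}$; putting $k=l$ forces $a_{ll} = -a_{ll}$, hence $a_{ll}=0$.

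There is no real obstacle here — the whole lemma is a bookkeeping exercise in tracking how an antilinear diagonal involution conjugates a matrix, together with the elementary description of the adjoint's matrix. The only point requiring a little care is keeping the conjugations straight: since $C_{\xi,\theta}$ is antilinear, applying it on the \emph{left} of the product conjugates the already-present scalars $a_{lk}$, whereas applying it on the right conjugates the input coefficients; doing this in the wrong order would misplace the bar on $a_{lk}$. I would also double-check the self-consistency of the characterizations: applying the symmetry condition $a_{lk}=e^{i(k-l)\theta}a_{kl}$ twice returns $a_{lk}=e^{i(k-l)\theta}e^{i(l-k)\theta}a_{lk}=a_{lk}$, confirming the relation is an involution as it must be. Once part (1) is established, parts (2) and (3) are immediate, so I would present the proof by first verifying (1) carefully on basis vectors and then deducing (2) and (3) in one or two lines each.
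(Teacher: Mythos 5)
Your proof is correct and follows essentially the same route as the paper: compute the matrix entry $b_{lk}$ of $C_{\xi,\theta}TC_{\xi,\theta}$ from $C_{\xi,\theta}e_k=e^{i\xi}e^{-ik\theta}e_k$, then compare with the matrix $[\,\overline{a_{kl}}\,]$ of $T^*$ to read off (2) and (3). The only cosmetic difference is that the paper evaluates $b_{lk}=\langle C_{\xi,\theta}TC_{\xi,\theta}e_k,e_l\rangle$ via the conjugation identity $\langle Ch,g\rangle=\langle Cg,h\rangle$ rather than by applying the three operators to $e_k$ in sequence; both yield $b_{lk}=e^{i(k-l)\theta}\,\overline{a_{lk}}$.
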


\begin{proof}
 To see (1) let us compute
\begin{multline*}
b_{lk}=\langle C_{ \xi ,\theta } TC_{\xi ,\theta } e_{k} , e_{l} \rangle  = 
{ \langle  C_{ \xi ,\theta } e_{l} , TC_{ \xi ,\theta } e_{k} \rangle  }=
\overline{ \langle TC_{ \xi ,\theta } e_{k} , C_{ \xi ,\theta } e_{l}   \rangle  }  \\ 
= \overline{ \langle Te^{i\xi } e^{-ik\theta }e_{k} , e^{i\xi }e^{-il\theta }e_{l}    \rangle } = e^{i(k-l)\theta }\, \overline{ \langle Te_{k} , e_{l} \rangle } = e^{i(k-l)\theta }\,\bar{a}_{lk}.
\end{multline*}
Conditions (2) and (3) follows from (1) and appropriate definitions.
\end{proof}

\begin{col}
Let $C_0$ be a conjugation on $H^2$ given by $(C_0f)z = \overline { f(\bar{z})}$, \, $f\in H^2$. Let $T\in L(H^{2})$ be given by the matrix $[a_{kl}]_{k,l\geqslant 0}$ according to the basis $\{e_{k}\}_{k\in \mathbb{Z}_+}$. Then, $T$ is $C_0$-symmetric if and only if $a_{kl} = a_{lk}$, $k,l=0, 1, 2 , \hdots  $, and $T$ is $C_0$--skew--symmetric if and only if $a_{ll}=0$, $a_{kl} = - \,a_{lk}$, $k,l=0, 1, 2 , \hdots  $,
\end{col}

% lemat 2
\begin{proposition}\label{lemma2}
Let $\varphi \in L^{\infty }$ have a Fourier expansion  $ \varphi (z) = \sum \limits ^{+ \infty } _{-\infty } \widehat{\varphi} (n)z^{n}$. The Toeplitz operator $T_{\varphi }$ has the matrix $ [a_{lk}]_{k,l = 0,1,2, \hdots  }  $ and $a_{lk} = \widehat{\varphi} (l-k)$. Then
\begin{enumerate}
\item the operator $C_{\xi ,\theta } T_\varphi C_{\xi ,\theta } $ has matrix $[b_{lk}]$ with $b_{lk}\! = e^{i(k-l)\theta }\overline{\widehat{\varphi} (l-k)}$
\item the Toeplitz operator $T_{\varphi }$ is $C_{\xi ,\theta }$--symmetric if and only if $\widehat{\varphi} (-k)$ $ = e^{ik\theta }\widehat{\varphi} (k)$, $k\in \mathbb{Z}$; in particular $\widehat{\varphi} (0)$ is arbitrary;
\item the operator $T_{\varphi }$ is $C_{\xi ,\theta }$--skew--symmetric if and only if $\widehat{\varphi} (-k) = -e^{ik\theta }\widehat{\varphi} (k)$, $k\in \mathbb{Z}$; in particular $\widehat{\varphi} (0)=0$ if $\Arg\theta \not=\pi$ and  $\widehat{\varphi} (0)$ is arbitrary if $Arg\, \theta =\pi$.
\end{enumerate}
\end{proposition}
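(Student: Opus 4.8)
The plan is to deduce all three items directly from Lemma~\ref{lemma2.1} once the matrix of $T_\varphi$ has been identified. First I would record that, since $e_l\in H^2$ and $P_{H^2}$ is the orthogonal projection onto $H^2$,
\[
a_{lk}=\langle T_\varphi e_k,e_l\rangle=\langle P_{H^2}(\varphi e_k),e_l\rangle=\langle \varphi e_k,e_l\rangle .
\]
Expanding $\varphi(z)=\sum_{n}\widehat{\varphi}(n)z^n$ gives $\varphi e_k=\sum_{n}\widehat{\varphi}(n)e_{n+k}$, whose $e_l$-coefficient is $\widehat{\varphi}(l-k)$; hence $a_{lk}=\widehat{\varphi}(l-k)$, which is the first assertion of the proposition.

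Granting this, item (1) is exactly Lemma~\ref{lemma2.1}(1): the matrix $[b_{lk}]$ of $C_{\xi,\theta}T_\varphi C_{\xi,\theta}$ satisfies $b_{lk}=e^{i(k-l)\theta}\bar a_{lk}=e^{i(k-l)\theta}\overline{\widehat{\varphi}(l-k)}$.

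For item (2) I would invoke Lemma~\ref{lemma2.1}(2): $T_\varphi$ is $C_{\xi,\theta}$-symmetric if and only if $a_{lk}=e^{i(k-l)\theta}a_{kl}$ for all $k,l\geqslant 0$, i.e. $\widehat{\varphi}(l-k)=e^{i(k-l)\theta}\widehat{\varphi}(k-l)$. The point is that as $k,l$ run over $\mathbb{Z}_+$ the integer $m:=l-k$ runs over all of $\mathbb{Z}$, so the whole family of matrix identities collapses to the single requirement $\widehat{\varphi}(m)=e^{-im\theta}\widehat{\varphi}(-m)$, $m\in\mathbb{Z}$, which is the stated relation $\widehat{\varphi}(-m)=e^{im\theta}\widehat{\varphi}(m)$; for $m=0$ it is vacuous, so $\widehat{\varphi}(0)$ is unconstrained. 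Item (3) is obtained in the same way from Lemma~\ref{lemma2.1}(3), giving $\widehat{\varphi}(-m)=-e^{im\theta}\widehat{\varphi}(m)$ for all $m\in\mathbb{Z}$, and the assertion on $\widehat{\varphi}(0)$ is read off from the $m=0$ instance $\widehat{\varphi}(0)=-\widehat{\varphi}(0)$.

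There is no genuinely hard step here; the work is bookkeeping once $a_{lk}=\widehat{\varphi}(l-k)$ is in place. The places where a little care is needed are: that the phase $e^{i\xi}$ plays no role — but this is already absorbed into Lemma~\ref{lemma2.1}, whose entries depend on $\theta$ alone; the reindexing argument, i.e. checking that the single scalar relation over $m\in\mathbb{Z}$ really is equivalent to the full array of equalities over $k,l\geqslant 0$; and, in the skew--symmetric case, the borderline diagonal term, where the dichotomy in $\Arg\theta$ claimed in (3) should be checked directly against the relation $\widehat{\varphi}(0)=-\widehat{\varphi}(0)$.
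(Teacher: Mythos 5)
Your derivation is correct and is exactly the route the paper intends: the proposition is stated without a separate proof precisely because it is Lemma~\ref{lemma2.1} applied to the Toeplitz matrix $a_{lk}=\widehat{\varphi}(l-k)$, and your reindexing $m=l-k$ (which does exhaust $\mathbb{Z}$ and gives the same scalar identity for every pair with $l-k=m$) is the only point needing any care. On the item you left ``to be checked'': the $m=0$ instance of the skew--symmetry relation is $\widehat{\varphi}(0)=-e^{i\cdot 0\cdot\theta}\widehat{\varphi}(0)=-\widehat{\varphi}(0)$, which forces $\widehat{\varphi}(0)=0$ for \emph{every} $\theta$ (consistently with Lemma~\ref{lemma2.1}(3), which asserts $a_{ll}=0$ unconditionally), so the dichotomy in the statement involving $\Arg\theta=\pi$ is a slip in the paper rather than something your argument fails to capture.
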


\begin{proposition}\label{prop2}
Let $C_{\xi ,\theta }$, $\xi ,\theta \in \mathbb{R} $, be a conjugation on $H^{2}$ given by $ (C_{\xi ,\theta }f)(z) = e^{i\xi }\overline{f(e^{i\theta }\bar{z})} $. Let $\varphi \in L^{\infty }$, $\varphi (z) = \sum ^{+ \infty }\limits _{n = -\infty }  \widehat{\varphi }(n)z^{n}$ and denote $\varphi_+ (z) = \sum ^{+ \infty }\limits _{n = 1 }  \widehat{\varphi }(n)z^{n}$,
$\varphi_{-} (z) = \sum ^{-1 }\limits _{n = -\infty }  \widehat{\varphi }(n)z^{n}$. If $T_{\varphi }$ is $C_{\xi ,\theta }$--normal then  there is $\eta $, $|\eta |= 1$ such that \begin{equation}\widehat{\varphi }(-k) = \eta e^{ik\theta }\widehat{\varphi}(k)\quad \text{for}\ \ k=1,2, \hdots
\end{equation}
or equivalently there is $\eta $, $|\eta |= 1$ such that
\begin{equation}\label{warmin}
\varphi_{-}=\eta\ e^{i\xi}\ \overline{C_{\xi,\theta}\varphi_{+}}
\end{equation}
\end{proposition}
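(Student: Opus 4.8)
The starting point is the characterization of $C$--normality from Theorem \ref{twCnor}: $T_\varphi$ is $C_{\xi,\theta}$--normal if and only if $\|T_\varphi C_{\xi,\theta}h\|=\|T_\varphi^* h\|$ for all $h\in H^2$ (condition (8)), equivalently $C_{\xi,\theta}T_\varphi T_\varphi^* = T_\varphi^* T_\varphi C_{\xi,\theta}$ (condition (5)). The plan is to express both sides as operators on $H^2$ written in the basis $\{e_k\}$ and compare matrix entries. Using Proposition \ref{lemma2}, the matrix of $T_\varphi$ is $a_{lk}=\widehat\varphi(l-k)$, and the matrix of $C_{\xi,\theta}T_\varphi C_{\xi,\theta}$ is $b_{lk}=e^{i(k-l)\theta}\overline{\widehat\varphi(l-k)}$, i.e.\ $C_{\xi,\theta}T_\varphi C_{\xi,\theta}=T_\psi$ where $\widehat\psi(k)=e^{-ik\theta}\overline{\widehat\varphi(-k)}$. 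So $C_{\xi,\theta}$--normality of $T_\varphi$ is equivalent to $T_\psi T_\psi^\ast = T_\varphi^\ast T_\varphi$ after conjugating condition (5) appropriately, or more symmetrically to $T_\varphi^\ast T_\varphi = T_\psi^\ast T_\psi$ with the above $\psi$.

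\textbf{Key computation.} The next step is to compute the compression $T_\varphi^*T_\varphi$ on $H^2$. Writing $\varphi=\widehat\varphi(0)+\varphi_++\varphi_-$ with $\varphi_+=P_{H^2}\varphi-\widehat\varphi(0)$ analytic vanishing at $0$ and $\varphi_-$ the antianalytic part, one has on $H^2$ the Brown--Halmos type identity: $T_\varphi^*T_\varphi-T_\varphi T_\varphi^* = T_{\bar\varphi}T_\varphi - T_\varphi T_{\bar\varphi}$, and more usefully $\langle T_\varphi^\ast T_\varphi e_k,e_l\rangle = \sum_{j\ge 0}\overline{\widehat\varphi(j-l)}\,\widehat\varphi(j-k)$. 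I would set this up as an infinite sum and likewise write $\langle T_\psi^\ast T_\psi e_k,e_l\rangle=\sum_{j\ge0}\overline{\widehat\psi(j-l)}\widehat\psi(j-k)$ with $\widehat\psi(m)=e^{-im\theta}\overline{\widehat\varphi(-m)}$. Equating the two families of sums for all $k,l\ge 0$ gives, after the substitution $n=j-l$ (take $l\le k$ and set $r=k-l\ge 0$), the identity
\begin{equation}\label{sumid}
\sum_{n\ge -l}\overline{\widehat\varphi(n)}\,\widehat\varphi(n+r)=\sum_{n\ge -l}\widehat\varphi(-n)\,\overline{\widehat\varphi(-n-r)}\qquad\text{for all }l\ge 0,\ r\ge 0.
\end{equation}
Subtracting consecutive values of $l$ (i.e.\ $l$ and $l+1$) kills the tails and isolates the single boundary term: for every $l\ge 0$ and $r\ge0$,
\begin{equation}\label{boundary}
\overline{\widehat\varphi(-l-1)}\,\widehat\varphi(r-l-1)=\widehat\varphi(l+1)\,\overline{\widehat\varphi(l+1-r)}.
\end{equation}
(One should double-check the argument shifts against the $e^{i(\cdot)\theta}$ factors; these are what ultimately produce the $e^{ik\theta}$ in the conclusion — I am suppressing them here for readability but they enter through $\widehat\psi$.)

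\textbf{Extracting $\eta$.} From \eqref{boundary} with $r=0$ one gets $|\widehat\varphi(-l-1)|=|\widehat\varphi(l+1)|$ for all $l\ge 0$, so $\widehat\varphi(-k)$ and $\widehat\varphi(k)$ have equal modulus for $k\ge 1$; for indices $k$ with $\widehat\varphi(k)\ne 0$ write $\widehat\varphi(-k)=\eta_k e^{ik\theta}\widehat\varphi(k)$ with $|\eta_k|=1$. The content of the proposition is that all the $\eta_k$ (for $k$ with $\widehat\varphi(k)\ne0$) coincide, so choosing $r>0$ in \eqref{boundary} and playing the relations for different indices against one another, one finds $\eta_k\overline{\eta_{k'}}=1$ whenever both $\widehat\varphi(k),\widehat\varphi(k')\ne0$; fixing one such $k_0$ and setting $\eta=\eta_{k_0}$ (or $\eta=1$ if $\varphi_+\equiv0$, in which case the conclusion is vacuous since then $\varphi_-$ is forced to be $0$ as well) gives $\widehat\varphi(-k)=\eta e^{ik\theta}\widehat\varphi(k)$ for every $k\ge1$. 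Finally, this pointwise relation on Fourier coefficients is exactly \eqref{warmin}: since $(C_{\xi,\theta}f)(z)=e^{i\xi}\overline{f(e^{i\theta}\bar z)}$ acts on $e_k$ by $C_{\xi,\theta}e_k=e^{i\xi}e^{-ik\theta}e_k$, a short computation identifies $\eta e^{i\xi}\overline{C_{\xi,\theta}\varphi_+}$ with the antianalytic series whose coefficient at $z^{-k}$ is $\eta e^{ik\theta}\widehat\varphi(k)$, which is $\widehat\varphi(-k)$.

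\textbf{Main obstacle.} The delicate point is the passage from \eqref{sumid} to \eqref{boundary} and then the rigidity argument that the phases $\eta_k$ are all equal: one must be careful that the infinite sums converge (they do, since $\varphi\in L^\infty\subset L^2$, so $\sum|\widehat\varphi(n)|^2<\infty$), that the telescoping in $l$ is legitimate, and that the cross-terms in \eqref{boundary} genuinely force a \emph{single} global phase rather than merely equal moduli — this is where the hypothesis that $T_\varphi$ is $C_{\xi,\theta}$--\emph{normal} (an equality of operators, giving \eqref{boundary} for all $r$ simultaneously) does more than mere $C$--symmetry would. Keeping the $e^{i\theta}$ bookkeeping straight throughout is the other place where errors are easy to make.
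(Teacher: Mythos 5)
Your proposal is correct and follows essentially the same route as the paper: both invoke condition (5) of Theorem \ref{twCnor}, compare matrix entries of $T_\varphi^*T_\varphi$ and $C_{\xi,\theta}T_\varphi T_\varphi^*C_{\xi,\theta}$, isolate the products $\overline{\widehat\varphi(\pm(l+1))}\,\widehat\varphi(\pm(k+1))$ via the Brown--Halmos difference identity (your telescoping in $l$ with $r=k-l$ fixed is exactly the paper's $S^*XS-X$ computation from Remark \ref{remark1}), and then extract a single unimodular $\eta$ by separation of variables. The only deviations are the index/sign bookkeeping in \eqref{sumid}--\eqref{boundary}, which you flag yourself and which washes out, and your pairwise cross-term argument for equality of the phases $\eta_k$, which is in fact slightly more careful than the paper's treatment of vanishing coefficients.
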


% remark

\begin{remark}\label{remark1} Let us consider $\varphi,\psi \in L^{\infty }$ with the Fourier expansion  $\varphi (z) = \sum ^{+ \infty }\limits _{n = -\infty }  \widehat{\varphi }(n)z^{n}$ and $\psi (z) = \sum ^{+ \infty }\limits _{n = -\infty }  \widehat{\psi }(n)z^{n}$, respectively.
Let $T_{\varphi },T_{\psi }$ be Toeplitz operators on $H^{2}$.
The operator  $T_{\varphi }T_{\psi }$ is  not always a Toeplitz operator. In fact, as it was shown in  \cite{BH} that
 \begin{equation}\label{comfor} \langle T_{\varphi } T_{\psi } e_{k+1},e_{l+1} \rangle  - \langle T_{\varphi } T_{\psi } e_{k},e_{l}  \rangle =\widehat\varphi (l+1)\ \widehat\psi(-k-1).\end{equation}
\end{remark}

\begin{proof}[Proof of Proposition~\ref{prop2}]
 Applying Remark \ref{remark1} we have
\begin{multline}\label{wz1}
\langle (S^{*}T_{\bar{\varphi }}T_{\varphi } S - T_{\bar{\varphi }}T_{\varphi })e_{k}  ,e_{l}  \rangle  =
\langle  T_{\bar{\varphi }}T_{\varphi } Se_{k} , Se_{l}  \rangle  - \langle   T_{\bar{\varphi }}T_{\varphi } e_{k},e_{l} \rangle
\\
 =\langle  T_{\bar{\varphi }}T_{\varphi } e_{k+1} , e_{l+1}  \rangle  - \langle   T_{\bar{\varphi }}T_{\varphi } e_{k},e_{l} \rangle = \overline{\widehat\varphi({-l-1})}\, \,\widehat\varphi(-k-1).
\end{multline}
On the other hand, also using Lemma \ref{lemma2.1} and Remark \ref{remark1}, we get
\begin{align*}
 \langle (S^{*}C_{\xi ,\theta }T_{\varphi }T_{\bar{\varphi }}& C_{\xi ,\theta }S - C_{\xi ,\theta }T_{\varphi }T_{\bar{\varphi }} C_{\xi ,\theta })e_{k},e_{l}   \rangle   \\
&=\langle C_{\xi ,\theta }T_{\varphi }T_{\bar{\varphi }} C_{\xi ,\theta } e_{k+1}, e_{l+1}   \rangle - \langle C_{\xi ,\theta }T_{\varphi }T_{\bar{\varphi }} C_{\xi ,\theta } e_{k},e_{l}   \rangle  \\
&=e^{i(k-l)\theta }\  \overline{ \langle T_{\varphi }T_{\bar{\varphi }} e_{k+1},e_{l+1}   \rangle } - e^{i(k-l)\theta }\  \overline{ \langle  T_{\varphi }T_{\bar{\varphi }} e_{k},e_{l}  \rangle  }\\ &=  e^{i(k-l)\theta }\
\overline{\widehat\varphi({l+1})}\,{\widehat\varphi(k+1)}.
\end{align*}
The last equality follows from \eqref{wz1} for $T_{\varphi }T_{\bar{\varphi }} $. 
If $T_{\varphi }$ is $C_{\xi ,\theta } $- normal, by Theorem \ref{twCnor} (5), subtracting both  sides we get
%\begin{align*}
%e^{i(k-l)\theta } \bar{\alpha }_{k+1} \alpha _{l+1} =  \bar{\alpha } _{-k-1} \alpha _{-l-1} , \hspace{0.5cm}
%\end{align*}
\begin{align}\label{eq21}
e^{i(k-l)\theta }\
\overline{\widehat\varphi({l+1})}\,{\widehat\varphi(k+1)}=\overline{\widehat\varphi({-l-1})}\, \,\widehat\varphi(-k-1) %e^{i(k-l)\theta } \hspace{0.1cm} \overline{\widehat{\varphi} (k+1) } \hspace{0.1cm}  \widehat{\varphi} (l+1) = \overline{\widehat{\varphi} (-k-1) } \hspace{0.1cm}  \widehat{\varphi} (-l-1)
\end{align}
for $ k,l = 0,1,2,\hdots$.
Assume for the while that $ \widehat{\varphi } (k) \neq 0$, $k=\pm 1, \pm 2, \hdots $ Thus
\begin{equation}\label{wz31}
\overline{\bigg(\frac{\widehat{\varphi}(-l)}{e^{il\theta }\ \widehat{\varphi} (l)}\bigg)} = \bigg( \frac{\widehat{\varphi } (-k)}{e^{ik\varphi }\ \widehat{\varphi}  (k)}\bigg)^{-1}
\end{equation}
for $k,l=1,2, \hdots  $
Hence, there is $\eta$ such that $  \frac{\widehat{\varphi } (-k)}{e^{ik\varphi }\widehat{\varphi } (k)} = \eta$ for $k=1,2,\hdots $. Moreover, by \eqref{wz31}, we get $|\eta | = 1 $.
Thus % there is $\eta$ with $|\eta | = 1 $ such that
\begin{equation}\label{eq22}
\widehat{\varphi } (-k)= \eta\, e^{ik\theta }\,\widehat{\varphi } (k)\quad  \text{for}\quad k=1,2\hdots.
\end{equation}
If $\widehat{\varphi } (k)=0$ and \eqref{eq22} is fulfilled then $\widehat{\varphi } (-k)=0$ and \eqref{eq21} holds.
\end{proof}

% główne twierdzenie, które chcemy udowodnić

\begin{theorem}\label{mainch2}
Let $C_{\xi ,\theta }$, $\xi ,\theta \in \mathbb{R} $, be a conjugation on $H^{2}$ given by $ (C_{\xi ,\theta }f)(z) = e^{i\xi }\overline{f(e^{i\theta }\bar{z})} $. Let $\varphi \in L^{\infty }$, $\varphi (z) = \sum ^{+ \infty }\limits _{n = -\infty }  \widehat{\varphi }(n)z^{n}$ and denote $\varphi_+ (z) = \sum ^{+ \infty }\limits _{n = 1 }  \widehat{\varphi }(n)z^{n}$,
$\varphi_{-} (z) = \sum ^{-1 }\limits _{n = -\infty }  \widehat{\varphi }(n)z^{n}$. Then $T_{\varphi }$ is $C_{\xi ,\theta }$-normal if and only if there is $\eta $, $|\eta |= 1$ such that
\begin{align}
\varphi_{-}=\eta\ e^{i\xi} \ \overline{C_{\xi,\theta}\varphi_{+}}&\qquad \text{and}\label{warunek1}\\
(\eta-\bar\eta) \varphi_{+}\,C_{\xi,\theta}\varphi_{+}+\overline{\widehat{\varphi}(0)} (\eta-1)&e^{i\xi}\varphi_{+}-\widehat{\varphi}(0)(\bar\eta-1)C_{\xi,\theta}\varphi_{+}=0.\label{warunek2}
\end{align}
\end{theorem}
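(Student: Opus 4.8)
The plan is to convert $C_{\xi,\theta}$-normality of $T_{\varphi}$ into a single operator equation on $H^{2}$ and to test it against the basis $\{e_{k}\}$, isolating an ``interior'' part that reproduces \eqref{warunek1} and a ``boundary'' part that yields \eqref{warunek2}. By Theorem~\ref{twCnor}(5), $T_{\varphi}$ is $C_{\xi,\theta}$-normal iff $C_{\xi,\theta}T_{\varphi}T_{\varphi}^{*}=T_{\varphi}^{*}T_{\varphi}C_{\xi,\theta}$, hence (multiplying on the right by $C_{\xi,\theta}$ and using $C_{\xi,\theta}^{2}=I$) iff
\begin{equation*}
X:=T_{\varphi}^{*}T_{\varphi}=C_{\xi,\theta}\,T_{\varphi}T_{\varphi}^{*}\,C_{\xi,\theta}=:Y .
\end{equation*}
Both $X$ and $Y$ are bounded positive operators; in particular $X=X^{*}$ and $Y=Y^{*}$, the latter because $C_{\xi,\theta}RC_{\xi,\theta}$ is self-adjoint whenever $R$ is (a direct consequence of $\langle h,g\rangle=\langle C_{\xi,\theta}g,C_{\xi,\theta}h\rangle$). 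In the basis $\{e_{k}\}$ the equality $X=Y$ is then equivalent to: (i) $\langle(S^{*}XS-X)e_{k},e_{l}\rangle=\langle(S^{*}YS-Y)e_{k},e_{l}\rangle$ for all $k,l\ge0$; and (ii) $Xe_{0}=Ye_{0}$ --- because (i) telescopes to fix every matrix entry with $\min(k,l)\ge1$ in terms of the first row and the first column, while self-adjointness makes the first row the conjugate of the first column, so (ii) takes care of the rest.

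Condition (i) is exactly equation \eqref{eq21} of Proposition~\ref{prop2}: its two sides are the quantities $\overline{\widehat{\varphi}(-l-1)}\,\widehat{\varphi}(-k-1)$ and $e^{i(k-l)\theta}\,\overline{\widehat{\varphi}(l+1)}\,\widehat{\varphi}(k+1)$ already computed there from Remark~\ref{remark1} and Lemma~\ref{lemma2.1}. As shown in that proof, \eqref{eq21} holds iff there is $\eta$, $|\eta|=1$, with $\widehat{\varphi}(-k)=\eta e^{ik\theta}\widehat{\varphi}(k)$ for all $k\ge1$, i.e.\ iff \eqref{warunek1} holds --- the modes with $\widehat{\varphi}(k)=0$ being handled exactly as there, and the implication \eqref{warunek1}$\Rightarrow$\eqref{eq21} being a direct substitution. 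It thus remains to prove that, assuming \eqref{warunek1}, condition (ii) is equivalent to \eqref{warunek2}.

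To analyze (ii) I would compute the two first columns. From $T_{\varphi}e_{0}=P_{H^{2}}\varphi=\widehat{\varphi}(0)+\varphi_{+}$, $T_{\varphi}^{*}e_{0}=P_{H^{2}}\bar{\varphi}=\overline{\widehat{\varphi}(0)}+\overline{\varphi_{-}}$ and $C_{\xi,\theta}e_{0}=e^{i\xi}e_{0}$ one gets
\begin{equation*}
Xe_{0}=P_{H^{2}}\!\big(\bar{\varphi}\,(\widehat{\varphi}(0)+\varphi_{+})\big),\qquad
Ye_{0}=e^{-i\xi}\,C_{\xi,\theta}\,P_{H^{2}}\!\big(\varphi\,(\overline{\widehat{\varphi}(0)}+\overline{\varphi_{-}})\big).
\end{equation*}
Expanding each product into its analytic and co-analytic parts, applying $P_{H^{2}}$ term by term, and then using \eqref{warunek1} in the form $\overline{\varphi_{-}}=\bar{\eta}\,e^{-i\xi}C_{\xi,\theta}\varphi_{+}$ together with the elementary rules $C_{\xi,\theta}(fg)=e^{-i\xi}(C_{\xi,\theta}f)(C_{\xi,\theta}g)$, $C_{\xi,\theta}(C_{\xi,\theta}\varphi_{+})=\varphi_{+}$, $C_{\xi,\theta}(c\cdot 1)=\bar c\,e^{i\xi}$ for scalars $c$, and the key intermediate identity
\begin{equation*}
C_{\xi,\theta}\,P_{H^{2}}\!\big(\varphi_{-}\overline{\varphi_{-}}\big)=e^{i\xi}\,P_{H^{2}}\!\big(|\varphi_{+}|^{2}\big)
\end{equation*}
(valid under \eqref{warunek1}, since then $\varphi_{-}(z)=\eta\,\varphi_{+}(e^{i\theta}\bar z)$, so it follows by matching Fourier coefficients), the contributions $|\widehat{\varphi}(0)|^{2}$ and $P_{H^{2}}(|\varphi_{+}|^{2})$ cancel in $Xe_{0}-Ye_{0}$, leaving
\begin{equation*}
Xe_{0}-Ye_{0}=(1-\eta)\overline{\widehat{\varphi}(0)}\,\varphi_{+}+e^{-i\xi}\widehat{\varphi}(0)(\bar{\eta}-1)\,C_{\xi,\theta}\varphi_{+}+e^{-i\xi}(\bar{\eta}-\eta)\,\varphi_{+}\,C_{\xi,\theta}\varphi_{+}.
\end{equation*}
Multiplying by $-e^{i\xi}$ turns $Xe_{0}=Ye_{0}$ into precisely \eqref{warunek2}. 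Combined with the equivalence (i) $\Leftrightarrow$ \eqref{warunek1}, this gives both directions of the theorem.

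The main obstacle is this last explicit computation of $Xe_{0}$ and $Ye_{0}$: one must keep careful track of which Fourier modes survive each application of $P_{H^{2}}$, handle the antilinearity of $C_{\xi,\theta}$ (conjugation of scalars and the twisted product rule), and above all verify the cancellation identity $C_{\xi,\theta}P_{H^{2}}(\varphi_{-}\overline{\varphi_{-}})=e^{i\xi}P_{H^{2}}(|\varphi_{+}|^{2})$, which is what makes the ``large'' terms drop out and leaves the low-complexity condition \eqref{warunek2}. Everything else is routine bookkeeping, and the telescoping reduction in the first step lets us recycle the matrix computation already carried out for Proposition~\ref{prop2}.
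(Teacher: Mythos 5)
Your argument is correct, and its second half takes a genuinely different route from the paper's. The first half coincides with the paper: both proofs invoke Proposition~\ref{prop2} (via \eqref{eq21}) to extract $\eta$ and condition \eqref{warunek1}. Where you diverge is in how the residual condition \eqref{warunek2} is obtained. The paper substitutes the decomposition $T_{\varphi}=T_{\varphi_{+}}+\widehat{\varphi}(0)I+\eta T_{\bar{\varphi}_{\sim}^{\theta}}$, expands $T_{\varphi}T_{\varphi}^{*}$ and $T_{\varphi}^{*}T_{\varphi}$ into nine terms each, conjugates termwise using Lemma~\ref{fiteta}, and uses the Brown--Halmos product rules to exhibit the difference $C_{\xi,\theta}T_{\varphi}T_{\varphi}^{*}C_{\xi,\theta}-T_{\varphi}^{*}T_{\varphi}$ as a single Toeplitz operator whose symbol must vanish; \eqref{warunek2} is the analytic part of that symbol. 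You instead observe that this difference $Z=Y-X$ is self-adjoint and that condition (i) says precisely $S^{*}ZS=Z$, i.e.\ $Z$ is constant on diagonals; hence $Z=0$ reduces to the single vector equation $Ze_{0}=0$, which you evaluate directly. Your reduction is essentially the Brown--Halmos characterization of Toeplitz operators applied to $Z$, and it explains structurally why only the analytic part of the symbol matters (self-adjointness supplies the co-analytic half for free) --- something the paper asserts but does not isolate. The trade-off: the paper's computation yields the full symbol \eqref{opzero} explicitly, while yours replaces Lemma~\ref{fiteta} and the long expansion by the computation of $Xe_{0}$ and $Ye_{0}$ plus the identity $C_{\xi,\theta}P_{H^{2}}(|\varphi_{-}|^{2})=e^{i\xi}P_{H^{2}}(|\varphi_{+}|^{2})$, which I have checked and which does hold under \eqref{warunek1} (it follows from $\varphi_{-}(z)=\eta\,\varphi_{+}(e^{i\theta}\bar{z})$ and the realness of $|\varphi_{+}|^{2}$). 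Your final expression for $Xe_{0}-Ye_{0}$ and the normalization by $-e^{i\xi}$ reproduce \eqref{warunek2} exactly, so both directions of the equivalence go through.
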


Denote by  $\varphi_\sim^{\theta}(z)=\,e^{-i\xi}\,C_{\xi,\theta}\varphi_{+}(z)=\,\overline{\varphi_+(e^{i\theta}\bar z)}$.
Easy to see that $\overline{\varphi_\sim^{\theta}}=\bar{\varphi}_\sim^{\theta}$.

%lemat dotyczący fi do teta
\begin{lemma} \label{fiteta}
With the notation above the following holds:
\begin{enumerate}
  \item $C_{\xi ,\theta}T_{\varphi _{+}} C_{\xi ,\theta}=T_{\varphi_\sim ^{\theta}}$,
  \item $C_{\xi ,\theta}T_{\bar{\varphi}_{+}} C_{\xi ,\theta}=T_{\bar{\varphi}_\sim ^{\theta}}$,
  \item $C_{\xi ,\theta}T_{\varphi_\sim ^{\theta}} C_{\xi ,\theta}=T_{\varphi _{+}}$,
  \item $C_{\xi ,\theta}T_{\bar{\varphi}_\sim ^{\theta}} C_{\xi ,\theta }=T_{\bar{\varphi} _{+}}$.
\end{enumerate}
\end{lemma}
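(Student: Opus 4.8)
The plan is to verify each of the four identities by a direct computation with Fourier coefficients, using Lemma~\ref{lemma2.1}(1) together with the matrix description of the analytic Toeplitz operator $T_{\varphi_+}$. Recall that $\varphi_+$ is analytic with $\widehat{\varphi_+}(n)=\widehat\varphi(n)$ for $n\ge 1$ and $\widehat{\varphi_+}(n)=0$ for $n\le 0$, so $T_{\varphi_+}$ has matrix entries $a_{lk}=\widehat\varphi(l-k)$ supported on $l\ge k+1$; the operator $C_{\xi,\theta}T_{\varphi_+}C_{\xi,\theta}$ then has, by Lemma~\ref{lemma2.1}(1), matrix entries $b_{lk}=e^{i(k-l)\theta}\,\overline{a_{lk}}=e^{i(k-l)\theta}\,\overline{\widehat\varphi(l-k)}$. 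On the other hand $T_{\varphi_\sim^\theta}$ has matrix entries $c_{lk}=\widehat{\varphi_\sim^\theta}(l-k)$, and from the definition $\varphi_\sim^\theta(z)=\overline{\varphi_+(e^{i\theta}\bar z)}$ one reads off $\widehat{\varphi_\sim^\theta}(-m)=e^{im\theta}\,\overline{\widehat\varphi(m)}$ for $m\ge 1$ and all other coefficients zero. Matching indices ($m=l-k$, but with the roles of $k,l$ swapped since $\varphi_\sim^\theta$ is anti-analytic) gives $b_{lk}=c_{lk}$, which is (1).

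Part (2) is identical with $\varphi_+$ replaced by $\bar\varphi_+$; here one uses that $T_{\bar\varphi_+}=T_{\varphi_+}^*$ has matrix $\overline{a_{kl}}=\overline{\widehat\varphi(k-l)}$ supported on $l\ge k-1$... more precisely on $k\ge l+1$, and that $\overline{\varphi_\sim^\theta}=\bar\varphi_\sim^\theta$ (the remark stated just before the lemma), so the conjugate-symbol bookkeeping is the mirror image of part (1). Parts (3) and (4) then follow with almost no extra work: since $C_{\xi,\theta}$ is an involution, conjugating both sides of (1) by $C_{\xi,\theta}$ gives $T_{\varphi_+}=C_{\xi,\theta}T_{\varphi_\sim^\theta}C_{\xi,\theta}$, which is (3), and likewise (4) is obtained from (2). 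Alternatively one can observe directly that applying the ``$\sim^\theta$'' operation twice returns the original symbol, i.e. $(\varphi_\sim^\theta)_\sim^\theta=\varphi_+$, which is the symbol-level shadow of $C_{\xi,\theta}^2=I$.

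I do not expect any genuine obstacle here; the only thing to be careful about is the index bookkeeping, namely keeping straight that $T_{\varphi_+}$ is a lower-triangular (strictly below the diagonal) matrix while $T_{\varphi_\sim^\theta}$ is upper-triangular, and that Lemma~\ref{lemma2.1}(1) transposes the support while conjugating entries and inserting the phase $e^{i(k-l)\theta}$. One should also record that $T_{\varphi_+},T_{\bar\varphi_+},T_{\varphi_\sim^\theta},T_{\bar\varphi_\sim^\theta}$ are all bounded (since $\varphi\in L^\infty$ forces $\varphi_\pm\in L^\infty$... actually $\varphi_+$ need only be in $L^2$, but boundedness of $T_{\varphi_+}$ as an operator is exactly what makes the matrix manipulations legitimate; this is standard and can be invoked without comment, or one works purely formally on the dense span of the $e_k$ and extends by continuity). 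Thus the cleanest write-up is: prove (1) by the Fourier-coefficient computation above, deduce (3) by conjugating with $C_{\xi,\theta}$, and note that (2) and (4) are obtained the same way after replacing $\varphi_+$ with $\bar\varphi_+$ and using $\overline{\varphi_\sim^\theta}=\bar\varphi_\sim^\theta$.
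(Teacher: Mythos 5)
Your overall strategy (comparing matrix entries via Lemma~\ref{lemma2.1}(1)) is a legitimate alternative to the paper's argument, which instead computes $\langle C_{\xi,\theta}T_{\varphi_+}C_{\xi,\theta}f,g\rangle$ directly as an integral and performs the substitution $\omega=e^{i\theta}\bar z$. But as written your computation for (1) contains a genuine error that makes the two sides fail to match: you have miscomputed the Fourier expansion of $\varphi_\sim^\theta$. Since $\varphi_+(w)=\sum_{n\ge 1}\widehat\varphi(n)w^n$, one gets
\[
\varphi_\sim^\theta(z)=\overline{\varphi_+(e^{i\theta}\bar z)}=\sum_{n\ge 1}\overline{\widehat\varphi(n)}\,e^{-in\theta}z^{n},
\]
because conjugating $\bar z^{\,n}$ returns $z^n$. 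So $\varphi_\sim^\theta$ is \emph{analytic} (this is also forced by $\varphi_\sim^\theta=e^{-i\xi}C_{\xi,\theta}\varphi_+$ with $C_{\xi,\theta}$ mapping $H^2$ into $H^2$, and is consistent with the paper writing $\varphi_-=\eta\,\overline{\varphi_\sim^\theta}$ for the co-analytic part). Your claim that $\widehat{\varphi_\sim^\theta}(-m)=e^{im\theta}\overline{\widehat\varphi(m)}$ with all other coefficients zero puts $\varphi_\sim^\theta$ on the anti-analytic side, so your $T_{\varphi_\sim^\theta}$ is strictly upper triangular while your (correct) formula $b_{lk}=e^{i(k-l)\theta}\overline{\widehat\varphi(l-k)}$ for $C_{\xi,\theta}T_{\varphi_+}C_{\xi,\theta}$ is strictly lower triangular; these are supported on disjoint index sets and cannot be equal. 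The phrase ``with the roles of $k,l$ swapped'' does not repair this. Relatedly, your closing remark that Lemma~\ref{lemma2.1}(1) ``transposes the support'' is false: that lemma gives $b_{lk}=e^{i(k-l)\theta}\bar a_{lk}$ (entries conjugated in place with a phase), not $\bar a_{kl}$, so conjugation by $C_{\xi,\theta}$ preserves triangularity. The two mistakes happen to compensate numerically, which is why you land on a true statement, but the verification as written does not go through.

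The fix is short: with the correct coefficients $\widehat{\varphi_\sim^\theta}(m)=e^{-im\theta}\overline{\widehat\varphi(m)}$ for $m\ge 1$, the entries of $T_{\varphi_\sim^\theta}$ are $c_{lk}=e^{i(k-l)\theta}\overline{\widehat\varphi(l-k)}$ for $l\ge k+1$ and $0$ otherwise, which visibly equals $b_{lk}$; part (2) is then the adjoint/mirror statement on the upper triangle, and your derivation of (3) and (4) by conjugating with the involution $C_{\xi,\theta}$ is fine and matches the paper. Once corrected, your route is essentially equivalent in length to the paper's integral substitution; it buys nothing extra but is perfectly acceptable.
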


\begin{proof}
To see (1) let us calculate for $f,g\in H^2$ :
\begin{align*}
\langle C_{\xi ,\theta } & T_{\varphi _{+}} C_{\xi, \theta} f,g\rangle =  \langle C_{\xi, \theta} g, T_{\varphi _{+}} C_{\xi , \theta} f \rangle =  \langle C_{\xi , \theta}g, P_{H^2}M_{\varphi _{+}}C_{\xi , \theta}f \rangle = \\ &= \langle C_{\xi , \theta}g,M_{\varphi _{+}}C_{\xi ,\theta}f\rangle =\int e^{i\xi\,}\overline{g(e^{i\theta}\bar z)}\ \overline{\varphi _{+}(z)\, e^{i\xi}\,\overline{f(e^{i\theta}\bar z)}}\,dm(z) = \\ &= \int\bar{\varphi}_{+}(z) f(e^{i\theta}\bar z)\,\overline{g(e^{i\theta}\bar z)}\,dm(z).
\end{align*}
Let us substitute $\omega=e^{i\theta}\bar z$. Then $z=e^{i\theta}\bar{\omega}$. Thus
\begin{align*}
\langle C_{\xi ,\theta}T_{\varphi _{+}} C_{\xi ,\theta} f,g\rangle = \int \overline{\varphi_{+}(e^{i\theta}\bar{\omega})}\ f(\omega)\,\overline{g(\omega)}\,dm(\omega)= \langle T_{\varphi_{\sim}^{\theta}} f,g\rangle .
\end{align*}
The property (3) follows from (1) since $(\varphi_\sim^{\theta})_\sim^{\theta}=\varphi$ and (2), (4) follows from (1) and (3) taking $\bar{\varphi}$ instead of $\varphi$.
\end{proof}

\begin{proof}[Proof of Theorem~\ref{mainch2}]% For simplicity let us denote $C_{\xi,\theta}\varphi_{+}=e^{i\xi}\varphi_\sim ^\theta$.
 Let us apply Proposition \ref{prop2} and by \eqref{warmin} operator $T_\varphi$ being $C_{\xi ,\theta }$-normal has to be represented as
$$T_\varphi=T_{\varphi_{+}}+\widehat{\varphi}(0) I+\eta e^{i\xi}\ T_{\overline{C_{\xi,\theta}\varphi_{+}}}=
T_{\varphi_{+}}+\widehat{\varphi}(0) I+\eta T_{\bar\varphi_{\sim}^\theta}.
$$
 Therefore $$T_{\varphi }^*=T_{\bar{\varphi }_{+}} + \overline{ \widehat{\varphi}(0)} I+\bar \eta  T_{\varphi_\sim ^{\theta}}.$$ Let  us calculate:
\begin{multline*}
T_{\varphi}T_{\varphi}^{*}=T_{\varphi_{+}}T_{\bar{\varphi}_{+}}+\overline{\widehat{\varphi}
(0)}\, T_{\varphi_{+}} +\bar \eta\,  T_{\varphi_{+}} T_{\varphi_{\sim}^{\theta}}+\\ +\widehat{\varphi}(0)\, T_{\bar\varphi_{+}}+|\widehat{\varphi}(0)|^2\ I +\widehat{\varphi}(0)\, \bar \eta\,  T_{\varphi_{\sim}^{\theta}}+\\ +\eta  T_{\bar{\varphi}_{\sim}^{\theta}} T_{\bar{\varphi}_{+}} +
\overline{\widehat{\varphi}(0)}\, \eta\,  T_{\bar{\varphi}_{\sim}^{\theta}} + |\eta|^2\, T_{\bar{\varphi}_{\sim}^{\theta}} T_{\varphi_{\sim}^{\theta}}.
\end{multline*}
Hence, by Lemma \ref{fiteta} we will get
\begin{multline*}
C_{\xi ,\theta} T_{\varphi}T_{\varphi}^{*} C_{\xi ,\theta}=T_{\varphi_{\sim}^{\theta}} T_{\bar{\varphi}_{\sim}^{\theta}} + \widehat{\varphi}(0)\, T_{\varphi_{\sim}^{\theta}} +\eta\,  T_{\varphi_{\sim}^{\theta}}T_{\varphi_{+}}+\\ +\overline{\widehat{\varphi}(0)}\,  T_{\bar{\varphi}_{\sim}^{\theta}}+ |\widehat{\varphi}(0)|^2\ I +\overline{\widehat{\varphi}(0)} \eta \, T_{\varphi_{+}} +\\+  \bar\eta \, T_{\bar{\varphi}_{+}} T_{\bar{\varphi}_{\sim}^{\theta}}+ \widehat{\varphi}(0)\bar\eta \,T_{\bar\varphi_{+}}+T_{\bar\varphi_{+}}T_{\varphi_{+}}.
\end{multline*}
On the other hand, we have
\begin{multline*}
T_{\varphi}^{*}T_{\varphi}=T_{\bar\varphi_{+}}T_{\varphi_{+}} +\widehat{\varphi}(0)\, T_{\bar{\varphi}_{+}}+\eta\, T_{\bar{\varphi}_{+}} T_{\bar{\varphi}_\sim ^{\theta}}+ \\ +
\overline{\widehat{\varphi}(0)}\, T_{\varphi_{+}} + |\widehat{\varphi}(0)|^2\, I+ \eta\overline{\widehat{\varphi}(0)}\, T_{\bar{\varphi}_\sim^{\theta}}+\\ +\bar\eta\, T_{\varphi_\sim^{\theta}} T_{\varphi _{+}} + \bar\eta \widehat{\varphi}(0)\, T_{\varphi_\sim ^{\theta}}+T_{\varphi_\sim ^{\theta}} T_{\bar{\varphi}_\sim^{\theta}}.
\end{multline*}
Since $\varphi_+$ is analytic and $\bar\varphi_+ $ is coanalytic thus by \cite{BH}, we have following
\begin{equation}\label{opzero}
\begin{alignedat}{23}
C_{\xi \theta} T_{\varphi}T_{\varphi}^{*} C_{\xi \theta} - T_{\varphi}^{*}T_{\varphi}  &=  (\eta  - \bar\eta ) T_{\varphi _{\sim}^{\theta } \varphi _{+}} + (\bar\eta  - \eta ) T_{\bar\varphi _{+}\bar{\varphi}_\sim^{\theta}}  + \\ &+ (\widehat{\varphi}(0) - \bar\eta  \widehat{\varphi}(0)) T_{\varphi_\sim^{\theta}} + (\overline{\widehat{\varphi}(0)} - \eta  \overline{\widehat{\varphi}(0)})T_{\bar\varphi _{\sim } ^{\theta }} + \\&+ (\overline{\widehat{\varphi}(0)} \eta  - \overline{\widehat{\varphi}(0))}T_{\varphi _{+}} + (\widehat{\varphi}(0)\bar\eta  - {\widehat{\varphi}(0))}T_{\bar\varphi _{+}}.
\end{alignedat}
\end{equation}
 The condition for the operator $T_\varphi$ to be $C_{\xi ,\theta }$--normal is that the operator above has to be zero. In fact the operator above is  a Toeplitz one with the symbol (let say) $\psi\in L^\infty\subset L^2$. Thus the symbol $\psi$ has to be a zero. Hence, the analytic and co-analytic part, which are complex adjoint one to the other, of $\psi$ have to be 0.
%do publikacji fajnie byłoby to jednak wpisać poprawnie (po Portugalii)
%By theorem \ref{twCnor}.4 and Toeplitz operator properties we have following
%\begin{align*}
%(\eta - & \bar{  \eta })(T_{\varphi _{\sim }^{\theta } \varphi } -   T_{\bar{\varphi} \bar{\varphi}_-^{\theta} } ) + \alpha(\bar{\eta } -1)( T_{\bar{\varphi }} - T_{\varphi _{-}^{\theta }}) + \bar{\alpha }(\eta -1)(T_{\varphi } - T_{\bar{\varphi}_{-}^{\theta }})= \\ =& (\eta- \bar{\eta })(\varphi _{-}^{\theta } \varphi - \bar{\varphi}\bar{\varphi }_{-}^{\theta }) + \alpha (\bar{\eta }-1)(\bar{\varphi } - \varphi _{-}^{\theta })+\bar{\alpha } (\eta -1)(\varphi - \bar{\varphi}_{-}^{\theta }) = 0
%\end{align*}
Extracting the analytical part of the function $\psi$ we get:
\begin{align*}0=&
(\eta  - \bar\eta ) {\varphi _{+}\varphi _{\sim}^{\theta }} +
 \overline{\widehat{\varphi}(0)}\, (  \eta-1){\varphi _{+}} +
 \widehat{\varphi}(0) ( 1-\bar\eta) {\varphi_\sim^{\theta}}\\=&
(\eta-\bar\eta)e^{-i\xi} \varphi_{+}\,C_{\xi,\theta}\varphi_{+}+\overline{\widehat{\varphi}(0)} (\eta-1)\varphi_{+}-\widehat{\varphi}(0)(\bar\eta-1)e^{-i\xi}C_{\xi,\theta}\varphi_{+}.
\end{align*}
Hence we get \eqref{warunek2}.

Arguing the other direction, if \eqref{warunek1} and \eqref{warunek2} are fulfilled
 the operator considered in \eqref{opzero} have to be zero. \end{proof}

%Trzeba wymyślić tutaj jeszcze przykład do 3)

\begin{example}If, in Theorem \ref{mainch2}, the existing $\eta$ is real, then we have the following cases:
\begin{enumerate}
\item Let $\eta = 1 $ then \eqref{warunek2} is fulfilled and \eqref{warunek1} means that operator $T_{\varphi }$ is $C_{\xi , \theta }$-symmetric, see Lemma \ref{lemma2}, (2).
\item Let $\eta = -1 $ and $\widehat{\varphi}(0) = 0 $ then \eqref{warunek2} is fulfilled and \eqref{warunek1} with $\widehat{\varphi}(0) = 0 $ means that operator $T_{\varphi }$ is $C_{\xi , \theta }$-skew--symmetric, see Lemma \ref{lemma2}, (3).
\item For $\eta = -1 $, $\widehat{\varphi}(0)\neq  0$, $\Arg\theta\not=\pi$,  condition \eqref{warunek2} is equivalent to
\begin{equation}
\overline{\widehat{\varphi}(0)}\,\varphi_+=\widehat{\varphi}(0)\,e^{-i\xi}\,C_{\xi,\theta}\varphi_{+}=\widehat{\varphi}(0)
\varphi_\sim^{\theta}.\end{equation}
%and $\overline{\widehat{\varphi}(0)} \varphi_{+}\in \mathbb{R}$
Hence, in this case, the operator $T_{\varphi }$ is $C_{\xi , \theta }$-normal ( but neither $C_{\xi , \theta }$-symmetric nor  $C_{\xi , \theta }$-skew-symmetric) for $\varphi\in L^\infty$ if \begin{align*}&\hat\varphi(-k)=-e^{ik\theta}\hat\varphi(k) \quad \text{for }\ k=1,2,\dots\quad\text{and}\\
&\Arg\hat\varphi(k)\overset{\text{mod}\,2\pi}{=}\Arg\hat\varphi(0)-\tfrac {k}{2}\theta  \quad \text{for }\ k=1,2,\dots
\end{align*}
\end{enumerate}
\end{example}

%wniosek dla C_00

It is worth to notice the special case of  Theorem  \ref{mainch2}.
\begin{col}
Let $C_{0 }$, be a conjugation on $H^{2}$ given by {$ (C_{0}f)(z) = \overline{f(\bar{z})} $} for $f\in H^2$. Let $\varphi \in L^{\infty }$ and {$\varphi = \varphi_{-} + \widehat{\varphi}(0) + \varphi _{+ }  $}. Then, the Toeplitz operator $T_{\varphi }$ is $C_{0 }$-normal if and only if {there is $\eta $,} $|\eta |= 1$ such that
\begin{enumerate}
	\item $ \varphi _{-} = \eta \overline{ C_{0} \varphi _{+}} $ and
	\item $ (\eta-\bar\eta) \varphi_{+}\,C_{0}\varphi_{+}+\overline{\widehat{\varphi}(0)} (\eta-1)\varphi_{+}-\widehat{\varphi}(0)(\bar\eta-1)C_{0}\varphi_{+}=0 $
	\end{enumerate}
\end{col}

\begin{example} %It will be shown that existing { $\eta\neq \pm 1$}.
 Let $s \in (-1;1)$ and let  $\varphi(z)=\frac{ - s \bar{z} }{1-i s \bar{z}}+{(\tfrac 12+\tfrac 12 i)} + \frac{i s z}{1- i s  z}$. Conditions (1) and (2) of Corollary are fulfilled for $\eta = i$.  Thus $T_{\varphi}$ is $C_0$--normal but neither $C_0$-symmetric nor $C_0$-skew-symmetric by
 Lemma \ref{lemma2}.
\end{example}

\section{Composition Operators}

Let $(X,\Sigma,\mu)$ be a measure space with a non--negative $\sigma$-finite measure $\mu$ and consider a space $L^2(X,\Sigma,\mu)$. Then a measurable function $T \colon X \to X$ induces a composition operator $C_Tf=f \circ T$. It is known \cite{RW} that if $C_T$ is bounded then $\mu \circ T^{-1}$ is absoluty continuous with respect to $\mu$ and  the Radon-Nikodym derivative $h=\tfrac {d\mu \circ T^{-1}}{d\mu}$ is essentially bounded. Conversely, if $T$ satisfies this conditions, function $T$ induce bounded linear operator $C_T$ on $L^2(X,\Sigma,\mu)$. It is clear that $h$ is always nonnegative.
Note also the basic formula  \begin{equation}\label{cder}\int C_Tf\,d\mu=\int f\circ T\,d\mu=\int fh\,d\mu.\end{equation}

\begin{proposition}Take the conjugation $C$ in $ L^2(X,\Sigma,\mu)$ given by $C(f)(x)=\overline{f(x)}$.
Assume that $C_T$ is a bounded composition operator given by a measurable function $T \colon X \to X$. Then
following are equivalent:
\begin{enumerate}
  \item $C_T$ is $C$--normal,
  \item $C_T$ is normal.
\end{enumerate}
\end{proposition}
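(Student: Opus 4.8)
The plan is to exploit the fact that this particular conjugation, being pointwise complex conjugation, commutes with every composition operator; once this is noticed the equivalence drops out of Theorem~\ref{twCnor}. First I would check directly that $CC_T=C_TC$: for $f\in L^2(X,\Sigma,\mu)$ and $x\in X$,
\[
(CC_Tf)(x)=\overline{(f\circ T)(x)}=\overline{f(T(x))}=(\bar f\circ T)(x)=(C_TCf)(x).
\]
Since $C^2=I$ this can also be written as $CC_TC=C_T$.

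Next I would transfer this to the adjoint. For any conjugation $C$ and any $A\in L(\h)$ one has $(CAC)^{*}=CA^{*}C$ (a one-line consequence of $\langle Ca,b\rangle=\langle Cb,a\rangle$ together with $\langle Ca,Cb\rangle=\langle b,a\rangle$), so applying $*$ to $CC_TC=C_T$ gives $CC_T^{*}C=C_T^{*}$, i.e.\ $CC_T^{*}=C_T^{*}C$ as well.

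Now I would invoke the equivalence $(1)\Leftrightarrow(5)$ of Theorem~\ref{twCnor}: $C_T$ is $C$--normal if and only if $CC_TC_T^{*}=C_T^{*}C_TC$. Using the two commutation relations, the left-hand side equals $C_TCC_T^{*}=C_TC_T^{*}C$, so the condition becomes $(C_TC_T^{*}-C_T^{*}C_T)C=0$; since $C$ maps $\h$ bijectively onto itself, this is equivalent to $C_TC_T^{*}=C_T^{*}C_T$, that is, to normality of $C_T$. Conversely, if $C_T$ is normal then $CC_TC_T^{*}=C_TC_T^{*}C=C_T^{*}C_TC$ by the same two relations, so $C_T$ is $C$--normal. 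This proves $(1)\Leftrightarrow(2)$.

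I do not expect a genuine obstacle here: the entire content is the observation that pointwise conjugation commutes with composition operators, and the rest is bookkeeping with Theorem~\ref{twCnor}. One could equally well argue through condition $(8)$ of that theorem: since $C$ is isometric and $CC_T=C_TC$, we have $\|C_TCh\|=\|CC_Th\|=\|C_Th\|$, so $C$--normality of $C_T$ reads $\|C_Th\|=\|C_T^{*}h\|$ for every $h$, which is precisely normality of $C_T$.
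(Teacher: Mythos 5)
Your proof is correct, and it rests on a slightly different (and in fact stronger) key observation than the paper's. The paper proves only that $C$ commutes with the product $C_T^{*}C_T$, i.e.\ $CC_T^{*}C_TC=C_T^{*}C_T$, and it does so by an integral computation that passes through the Radon--Nikodym derivative $h=\tfrac{d\mu\circ T^{-1}}{d\mu}$; combined with condition (5)/(6) of Theorem~\ref{twCnor} this gives the equivalence. You instead notice the pointwise identity $CC_T=C_TC$, which needs no measure theory at all, and deduce $CC_T^{*}=C_T^{*}C$ by taking adjoints via $(CAC)^{*}=CA^{*}C$; the paper's commutation relation is then an immediate corollary, and the reduction of $C$--normality to normality is pure bookkeeping. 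Your closing remark via condition (8) of Theorem~\ref{twCnor} --- that $\|C_TCh\|=\|CC_Th\|=\|C_Th\|$ because $C$ is isometric, so $C$--normality reads $\|C_Th\|=\|C_T^{*}h\|$ --- is arguably the cleanest version of the argument. What the paper's route buys is only that it stays within the same computational framework (the change-of-variables formula \eqref{cder}) used later in Theorem~\ref{prop11} for general conjugations $(Cf)(x)=\overline{f(\alpha(x))}$, where the pointwise commutation $CC_T=C_TC$ genuinely fails unless $\alpha$ and $T$ commute; your shortcut is special to the case $\alpha=\mathrm{id}$, but for this proposition that is exactly the case at hand.
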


\begin{proof}
To show equivalence of (1) to (2) we will show that $CC_T^*C_TC=C_T^*C_T$. Let $f,g \in L^2(X,\Sigma,\mu)$ then
\begin{multline*}
\langle CC_T^*C_TCf,g\rangle = \langle Cg,C_T^*C_TCf\rangle = \langle C_TCg,C_TCf\rangle\\ = \int (Cg\circ T)\cdot \overline{Cf\circ T}\,d\mu = \int (\bar{g}\circ T)\ ( f \circ T)\,d\mu\\= \int\bar{g}f\,h\,d\mu=\langle C_Tf,C_Tg\rangle = \langle C_T^*C_Tf,g \rangle.\end{multline*}
\end{proof}
 Let us note that $(Cf)x=\overline{f(-x)}$ gives us  a conjugation in $L^2(\mathbb{R},m)$, ($m$ Lebesgue measure). On the other hand, $(Cf)x=\overline{f(1-x)}$ defines a conjugation on the space $L^2([0,1], m)$. Consider the general space $L^2(X,\mu)$, where $(X,\mu)$ is a measure space with non-negative measure $\mu$.
The above two situations lead to the following:%  for which $\alpha  $can be a special case of the following theorem.

%czerwone twierdzenie o zamianie zmiennych
%\begin{lemma}[see \cite{RW}, p. 114]
%Let $(X,\mathcal{S} , \mu )$ be a space with measumre and $(Y, \tau )$ be a measurable space. Let $T \colon X \rightarrow Y $ be a measurable and $g \colon Y \rightarrow \widehat{ \mathbb{R}}(\mathbb{C}) $ be measurable. Then
%$$
%\int g \quad d(\mu \circ T^{-1}) = \int g \circ T \quad d \mu .
%$$
%If $F$ is measurable then for $y\in Y$
%$$
%\int g(y)\quad  d (\mu \circ T^{-1} )(y) = \int \limits_{T^{-1}(F)} g (T(x))\quad d\mu (x).
%$$
%\end{lemma}

%twierdzenie o tym kiedy C jest w ogóle inwolucją
\begin{proposition}\label{lem1}
Let $(X,\Sigma,\mu)$ be a measure space with a non--negative measure $\mu$ and the antilinear operator $C\colon L^2(X,\Sigma,\mu)\to L^2(X,\Sigma,\mu)$  given by $(Cf)(x) = \overline{f(\alpha(x))} $, where $\alpha\colon X\to X$ is measurable. Then, $C$ is conjugation if and only if
\begin{enumerate}
\item $\alpha ^{2} = I_{X}$,
\item $ \mu = \mu\circ\alpha $.
\end{enumerate}
\end{proposition}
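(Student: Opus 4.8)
The plan is to verify directly the three defining properties of a conjugation for the given antilinear map $C$ — antilinearity, the identity $\langle h,g\rangle=\langle Cg,Ch\rangle$, and $C^{2}=I$ — and to observe that the last two exactly encode conditions (2) and (1). Antilinearity is immediate from $\overline{(\lambda f+g)\circ\alpha}=\bar\lambda\,\overline{f\circ\alpha}+\overline{g\circ\alpha}$, so that is not where the work lies. The one tool I would use throughout is the transfer (change of variables) formula $\int_{X}(g\circ\alpha)\,d\mu=\int_{X}g\,d(\mu\circ\alpha^{-1})$ for nonnegative or integrable measurable $g$; this is \eqref{cder} with $\alpha$ in place of $T$, and its Radon--Nikodym derivative $\tfrac{d(\mu\circ\alpha^{-1})}{d\mu}$ is the constant $1$ precisely when $\mu\circ\alpha^{-1}=\mu$.

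For the ``if'' direction I would assume (1) and (2). Since $\alpha^{2}=I_{X}$ one has $\alpha^{-1}=\alpha$, so (2) reads $\mu\circ\alpha^{-1}=\mu$; in particular $\mu\circ\alpha^{-1}\ll\mu$, which makes $f\mapsto f\circ\alpha$ well defined on equivalence classes, so $C$ is a genuine map on $L^{2}(X,\Sigma,\mu)$. Then the transfer formula applied to $g=\overline{g_{2}}\,g_{1}$ gives $\langle Cg_{2},Cg_{1}\rangle=\langle g_{1},g_{2}\rangle$ for all $g_{1},g_{2}\in L^{2}$, which is the inner-product axiom (and, with $g_{1}=g_{2}$, shows $C$ is isometric, hence bounded); and the one-line computation
\[
(C^{2}f)(x)=\overline{(Cf)(\alpha(x))}=\overline{\,\overline{f(\alpha^{2}(x))}\,}=f(\alpha^{2}(x))=f(x),
\]
using $\alpha^{2}=I_{X}$, gives $C^{2}=I$. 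Hence $C$ is a conjugation.

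For the ``only if'' direction I would assume $C$ is a conjugation. Testing the inner-product identity (equivalently, isometry) on indicators $\chi_{E}$ with $\mu(E)<\infty$ yields $\mu(E)=\|\chi_{E}\|^{2}=\|C\chi_{E}\|^{2}=\mu(\alpha^{-1}(E))$, since $C\chi_{E}=\chi_{\alpha^{-1}(E)}$; thus $\mu\circ\alpha^{-1}$ and $\mu$ agree on all sets of finite measure, hence on $\Sigma$ by $\sigma$-finiteness, i.e.\ $\mu\circ\alpha^{-1}=\mu$. Since $C$ is then automatically well defined, the $C^{2}$ computation above is valid and $C^{2}=I$ forces $f\circ\alpha^{2}=f$ $\mu$-a.e.\ for every $f\in L^{2}$; taking $f=\chi_{E}$ gives $\mu\big((\alpha^{2})^{-1}(E)\,\triangle\,E\big)=0$ for all $E\in\Sigma$. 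Finally I would choose a countable family $\{E_{n}\}\subset\Sigma$ separating the points of $X$ and discard the $\mu$-null set $\bigcup_{n}\big((\alpha^{2})^{-1}(E_{n})\,\triangle\,E_{n}\big)$ to conclude $\alpha^{2}=I_{X}$ $\mu$-a.e., i.e.\ (1); combined with $\mu\circ\alpha^{-1}=\mu$ this gives (2) in the stated form $\mu=\mu\circ\alpha$.

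I expect the routine part to be the two transfer-formula computations, and the one step that needs real care to be this last passage from the operator identity $C^{2}=I$ to the pointwise a.e.\ identity $\alpha^{2}=I_{X}$: one cannot read such an identity off an equality of operators without testing against enough functions, which is why the argument must go through indicators and a countable separating family of measurable sets (equivalently, under the mild assumption that $\Sigma$ is countably generated, e.g.\ $L^{2}(\mu)$ separable). Absent such a hypothesis, ``$\alpha^{2}=I_{X}$'' can only be asserted modulo $\mu$-null sets — which is anyway all that is needed for the operator statement.
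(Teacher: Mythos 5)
Your proof is correct and follows essentially the same route as the paper: a direct verification that $C^{2}=I$ encodes $\alpha^{2}=I_{X}$ and that the isometry/inner-product axiom encodes $\mu=\mu\circ\alpha^{-1}=\mu\circ\alpha$ via the change-of-variables formula. You are in fact more careful than the paper in the ``only if'' direction — testing against indicators, invoking $\sigma$-finiteness, and flagging that $C^{2}=I$ as an operator identity only yields $\alpha^{2}=I_{X}$ up to a $\mu$-null set — a point the paper's proof silently glosses over.
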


\begin{proof}
For $f \in L^2(X,\Sigma,\mu)$ and $x\in X$ we have $$ (C^{2}f)(x) = C(Cf)(x) = \overline{Cf(\alpha (x))} =  f(\alpha ^{2}(x)). $$ Hence $C^2=I$ is equivalent to $\alpha ^{2}=I_X$. For the second condition, for any $f,g\in L^2(X, \Sigma,\mu)$, let us  calculate
\begin{align*}
\langle Cf, Cg \rangle &= \int (Cf)(x)\overline{(Cg)(x)}d\mu (x) = \int \overline{f(\alpha (x))} \cdot g(\alpha (x)) d\mu(x)
\end{align*}
and
\begin{align*}
\langle g, f\rangle = \int g(x)\overline{f(x)} d\mu (x).
\end{align*}
Hence the equality of two above for all $f,g$ gives $\mu=
\mu\circ \alpha^{-1}=\mu\circ\alpha$.
\end{proof}

%twierdzenie ogólne
\begin{theorem}\label{prop11} Let $L^2(X,\Sigma, \mu)$ with conjugation $C$ given by $(Cf)(x) = \overline{f(\alpha(x))} $, i.e. $\alpha \colon X \to X$ be measurable function with  $\alpha ^{2} = I_{X}$
  and $ \mu = \mu\circ\alpha $. Assume that $C_T$ is a bounded composition operator given by a measurable function $T \colon X \to X$.
 Then, the operator $C_T$ is $C$--normal if and only if
\begin{enumerate}
  \item $T^{-1}(\Sigma)$ is essentially all $\Sigma$, i.e. for a given $\omega \in\Sigma$ there is $\tilde\omega\in\Sigma$ such that $m\big((T^{-1}(\tilde\omega)\setminus \omega)\cup (\omega\setminus T^{-1}(\tilde\omega))\big)=0$, and
  \item $h \circ T=h \circ \alpha$ $\mu$ a.e., where $h=\tfrac {d\mu \circ T^{-1}}{d\mu}$.
\end{enumerate}
\end{theorem}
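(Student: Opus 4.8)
The plan is to start from Theorem~\ref{twCnor}(8): the operator $C_T$ is $C$--normal if and only if $\|C_TCf\|=\|C_T^{*}f\|$ for every $f\in L^{2}(X,\Sigma,\mu)$, and then to compute both sides explicitly. On the left, $C_TCf=(Cf)\circ T$, so $\|C_TCf\|^{2}=\langle C_T^{*}C_T(Cf),Cf\rangle=\int|Cf|^{2}h\,d\mu=\int|f\circ\alpha|^{2}h\,d\mu$, and the substitution $x\mapsto\alpha(x)$ (legitimate since $\mu\circ\alpha=\mu$ and $\alpha^{2}=I_{X}$) gives $\|C_TCf\|^{2}=\int|f|^{2}\,(h\circ\alpha)\,d\mu$. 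On the right I would use the standard description of the adjoint of a composition operator, which follows from \eqref{cder} by a short computation: $C_T^{*}f=h\cdot g$, where $g$ is the $\Sigma$--measurable function with $g\circ T=E(f\mid T^{-1}(\Sigma))=:Ef$, the conditional expectation of $f$ onto $T^{-1}(\Sigma)$. Then, applying \eqref{cder} once more, $\|C_T^{*}f\|^{2}=\int h^{2}|g|^{2}\,d\mu=\int (h|g|^{2})\circ T\,d\mu=\int (h\circ T)\,|Ef|^{2}\,d\mu$. Thus the assertion of the theorem reduces to the single identity
\begin{equation*}
\int (h\circ T)\,|Ef|^{2}\,d\mu=\int (h\circ\alpha)\,|f|^{2}\,d\mu\qquad\text{for all }f\in L^{2},
\end{equation*}
which I will refer to as $(\ast)$.

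For the nontrivial direction, assume $(\ast)$. First I would test it on functions $f$ that are already $T^{-1}(\Sigma)$--measurable, for which $Ef=f$; using that $h\circ T$ is $T^{-1}(\Sigma)$--measurable and that $\int(h\circ\alpha)|f|^{2}=\int E(h\circ\alpha)\,|f|^{2}$, one obtains $\int_{A}\big(h\circ T-E(h\circ\alpha)\big)\,d\mu=0$ for all $A\in T^{-1}(\Sigma)$ of finite measure, hence $h\circ T=E(h\circ\alpha)$ $\mu$--a.e. (the restriction of $\mu$ to $T^{-1}(\Sigma)$ is $\sigma$--finite because $\mu(T^{-1}(Z))\le\|h\|_{\infty}\,\mu(Z)$). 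Feeding this back into $(\ast)$ for arbitrary $f$, and using $\int E(h\circ\alpha)|f|^{2}=\int (h\circ T)\,E(|f|^{2})$, turns $(\ast)$ into $\int (h\circ T)\big(E(|f|^{2})-|Ef|^{2}\big)\,d\mu=0$; since $E(|f|^{2})-|Ef|^{2}=E(|f-Ef|^{2})\ge 0$ and $h\circ T\ge 0$, this forces $\int (h\circ T)\,|f-Ef|^{2}\,d\mu=0$, i.e. $f=Ef$ $\mu$--a.e. on $\{h\circ T>0\}$, for every $f\in L^{2}$. Now $\{h\circ T=0\}=T^{-1}(\{h=0\})$ and $\mu(T^{-1}(\{h=0\}))=\int_{\{h=0\}}h\,d\mu=0$ by \eqref{cder}, so $\{h\circ T>0\}$ has full measure; therefore $f=Ef$ a.e. for every $f\in L^{2}$, which is exactly the statement that $T^{-1}(\Sigma)$ is essentially all of $\Sigma$, i.e. condition~(1). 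With $Ef=f$ for all $f$, the relation $h\circ T=E(h\circ\alpha)$ collapses to $h\circ T=h\circ\alpha$ $\mu$--a.e., which is condition~(2).

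The converse is then immediate: if (1) holds then $Ef=f$ for every $f$, so $(\ast)$ reads $\int (h\circ T)|f|^{2}=\int (h\circ\alpha)|f|^{2}$ for all $f\in L^{2}$, which is equivalent to (2); hence $\|C_TCf\|=\|C_T^{*}f\|$ and $C_T$ is $C$--normal by Theorem~\ref{twCnor}(8). I expect the only genuinely delicate point to be the passage from the one scalar identity $(\ast)$ to the two separate conclusions: the right move is to first extract a relation purely between $h\circ T$ and $h\circ\alpha$ (by restricting to $T^{-1}(\Sigma)$--measurable test functions), and then to use the strict positivity of $h\circ T$ --- which is automatic because $d\mu\circ T^{-1}=h\,d\mu$, so $\{h\circ T=0\}$ pulls back a null set --- together with the conditional Jensen inequality to force $f=Ef$ everywhere; this is what cleanly separates the ``$\sigma$--algebra'' statement (1) from the ``density'' statement (2). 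The remaining measure--theoretic bookkeeping (turning vanishing integrals over finite--measure sets into a.e. identities, using $\sigma$--finiteness) is routine.
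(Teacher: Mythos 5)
Your overall strategy is sound and, modulo packaging, is essentially the paper's: the paper computes $CC_T^*C_TC=M_{h\circ\alpha}$ and compares it with $C_TC_T^*$ (which acts as multiplication by $h\circ T$ on the range of $C_T$), and then defers the remaining measure-theoretic work to Whitley's Lemma~2; you make the same comparison through Theorem~\ref{twCnor}(8) and carry out Whitley's argument explicitly via the adjoint formula $C_T^*f=h\cdot g$, $g\circ T=E(f\mid T^{-1}(\Sigma))$. The computations $\|C_TCf\|^2=\int|f|^2(h\circ\alpha)\,d\mu$ and $\|C_T^*f\|^2=\int(h\circ T)|Ef|^2\,d\mu$ are correct, as is the first extraction $h\circ T=E(h\circ\alpha)$ from $T^{-1}(\Sigma)$-measurable test functions, and the identification of ``$Ef=f$ for all $f\in L^2$'' with condition (1).

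There is, however, one genuine gap at the point where you ``feed back'' and claim that $(\ast)$ turns into $\int(h\circ T)\bigl(E(|f|^2)-|Ef|^2\bigr)\,d\mu=0$. That step requires replacing the right-hand side $\int(h\circ\alpha)|f|^2\,d\mu$ by $\int E(h\circ\alpha)\,|f|^2\,d\mu$ for \emph{arbitrary} $f\in L^2$, which is legitimate only when $|f|^2$ is $T^{-1}(\Sigma)$-measurable or when $h\circ\alpha$ itself is $T^{-1}(\Sigma)$-measurable modulo null sets --- and the latter is precisely part of what is being proved (it is equivalent to $h\circ\alpha=h\circ T$). As written the step fails for a general $h\circ\alpha$. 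The repair stays inside your framework: put $r=h\circ\alpha-E(h\circ\alpha)$, so that $(\ast)$ combined with $h\circ T=E(h\circ\alpha)$ reads $\int(h\circ T)\bigl(E(|f|^2)-|Ef|^2\bigr)\,d\mu=-\int r\,|f|^2\,d\mu$. The left side is nonnegative by conditional Jensen, so $\int r\,w\,d\mu\le 0$ for every nonnegative $w\in L^1$, whence $r\le 0$ a.e.; since $\int_A r\,d\mu=0$ over a $\sigma$-finite exhaustion of $X$ by finite-measure sets in $T^{-1}(\Sigma)$ (which exists, as you note), $r=0$ a.e. This yields condition (2) first, and only then does $(\ast)$ collapse to $\int(h\circ T)\bigl(E(|f|^2)-|Ef|^2\bigr)\,d\mu=0$, from which your positivity argument together with $h\circ T>0$ a.e. gives $f=Ef$ and hence condition (1). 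The converse direction is fine.
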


%dowód ogólnego twierdzenia
\begin{proof}
For $f,g \in L^2(X,\mu )$ we have
\begin{align*}
\langle CC_T^*C_TCf,g\rangle &= \langle C_TCg,C_TCf \rangle =\int (Cg\circ T)\, \overline{(Cf\circ T)}\,d\mu \\ &= \int (\bar{g}\circ \alpha\circ T)\ (f\circ\alpha \circ T) \, d\mu \\ &= \int (\bar{g}\circ \alpha) \; (f\circ \alpha)\  h\,d\mu = \int f\,\bar{g}\  (h\circ \alpha^{-1})\,d\mu \circ \alpha ^{-1}
\end{align*}
Then, since $\alpha=\alpha^{-1}$, $$CC_T^*C_TCf =(h\circ \alpha^{-1})\cdot f.$$ If $f$ belongs to range of $C_T$ then $f=C_Tf_0$ and
\begin{align*}
C_TC_T^*f &=C_TC_T^*C_Tf_0=C_TCCC_T^*C_TCCf_0\\ &=C_TC\,\big(CC_T^*C_TC\big)\,(Cf_0)=C_TC((h\circ \alpha)\cdot (Cf_0))\\ &=C_T((\bar{h}\circ\alpha\circ\alpha)\,\cdot\,C(Cf_0))=C_T(h\cdot f_0)\\ &=(h\circ T)\cdot (C_Tf_0)=(h\circ T)\cdot f.
\end{align*}
If $C_T$ is $C$--normal then $$(h\circ \alpha)f=(h\circ T)f$$ for all $f$ in range of $C_T$. The rest of the proof is analogous as the proof of \cite[Lemma 2]{RW}.\end{proof}

%przykład do powyższego twierdzenia
\begin{example}Let us consider $L^2(\mathbb{R},m)$ with the conjugation $(Cf)x=\overline{f(-x)}$, $\alpha(x)=-x$. Let $T(x)=-x$ for $x\geqslant 0$ and $T(x)=-2x$ for $x<0$.
Then  the Radon--Nikodym derivative $h=\frac{dm \circ T^{-1}}{dm}$ is given by $h(x)=\tfrac 12$ for $x\geqslant 0$ and $h(x)=1$ for $x<0$. It is clear that $h\circ \alpha=h\circ T$, thus $C_T$ is $C$--normal. Furthermore, $h\not=h\circ T$ thus $C_T$ is not normal (see \cite[Lemma 2]{RW}) and direct calculation shows that it is also always neither $C$--symmetric nor $C$--skew--symmetric.
\end{example}%
%\begin{proposition} Let $ L^2(X,\mu)$ and mapping $C$ given by
%$(Cf)(x) = \overline{f( \alpha (x))}$
%with $\alpha $ measurable.
%Then $C$ is conjugation if and only if
%$\alpha ^2 = Id$ and
%$\mu = \mu \circ \alpha ^{-1}$.
%\end{proposition}
%\begin{proof}
%$(C^2f)(x) = C(Cf)(x) = \overline{Cf(\alpha (x))} =\overline{\overline{f(\alpha (\alpha (x))}} =f(\alpha ^2(x))=f(x)$ \\
%and \\
%$\langle Cf,Cg\rangle = \int (Cf)(x)\overline{(Cg)(x)} \,d\mu = \int \overline{f(\alpha (x))} g(\alpha (x))\,d\mu(x)=
%\int \overline{f}g \,d\mu \circ \alpha ^{-1} $
%thus $C$ is a conjugation.
%
%\end{proof}

%przykład 2 do powyższego twierdzenia%
%\begin{example} \textbf{dokonczyc}
%Let us consider $L^2([0,1], \mu )$ with the conjugation $(Cf)(x) = \overline{f(\alpha(x))} $ where $$ \alpha (x) = \begin{cases}
%x, \quad x \in \mathbb{Q} \\
%-x, \quad x\in \mathbb{N}\mathbb{Q}
%\end{cases}$$.
%\end{example}

\end{document}